\newtheorem{fact}{Fact}[section]
\newtheorem{theorem}[fact]{Theorem}
\newtheorem{definition}[fact]{Definition}
\newtheorem{exa}[fact]{Example}
\newtheorem{conj}[fact]{Conjecture}
\newtheorem{rremark}[fact]{Remark}
\newtheorem{rremarks}[fact]{Remarks}
\newtheorem{proposition}[fact]{Proposition}
\newenvironment{remark}{\begin{rremark} \rm}{\end{rremark}}
\newenvironment{remarks}{\begin{rremarks} \rm \mbox{}}{\end{rremarks}}
\newenvironment{example}{\begin{exa} \rm}{\end{exa}}
\newenvironment{conjecture}{\begin{conj} }{\end{conj}}
\newcommand{\smx}[4]{\bigl(\begin{smallmatrix}{#1}&{#2}\\{#3}&{#4}\end{smallmatrix}\bigr)}
\DeclareMathOperator{\T}{\mathbf T}
\DeclareMathOperator{\C}{\mathbf C} 
\DeclareMathOperator{\Z}{\mathbf Z}
\DeclareMathOperator{\N}{\mathbf N}
\DeclareMathOperator{\R}{\mathbf R}
\DeclareMathOperator{\Q}{\mathbf Q}
\DeclareMathOperator{\Hom}{Hom}
\DeclareMathOperator{\sing}{Sing}
\DeclareMathOperator{\lie}{Lie}
\DeclareMathOperator{\im}{Im}
\DeclareMathOperator{\Id}{Id}
\DeclareMathOperator{\codim}{codim}
\DeclareMathOperator{\diag}{diag}
\DeclareMathOperator{\sym}{Sym}
\def\iso{\cong}
\renewcommand{\phi}{\varphi}
\newcommand{\ext}{\mathrm{Ext}}
\newcommand{\gl}{\mathrm{GL}}
\title{The incidence class and the hierarchy of orbits}
\author{L. M. Feh\'er}
\address{Department of Analysis, Eotvos University Budapest, Hungary\\ R\'enyi Institute Budapest, Hungary}
\email{lfeher@renyi.hu}
\author{Zs. Patakfalvi}
\address{Department of Mathematics, University of Washington, Seattle, WA-98195, USA}
\email{pzs@math.washington.edu}
\begin{document}
\thanks{\noindent Supported by OTKA T046365MAT \\ Keywords: Thom polynomial, equivariant maps, equivariant Poincar\'e dual, multidegree, Joseph polynomial, incidence class\\ AMS Subject classification 32S20}

\begin{abstract} R. Rim\'anyi defined the incidence class of two singularities $\eta$ and $\xi$ as $[\eta]|_\xi$, the restriction of the Thom polynomial of $\eta$ to $\xi$. He conjectured that (under mild conditions) $[\eta]|_\xi\neq 0 \iff \xi\subset \overline\eta$. Generalizing this notion we define the incidence class of two orbits $\eta$ and $\xi$ of a representation. We give a sufficient condition (positivity) for $\xi$ to have the property that $[\eta]|_\xi\neq 0 \iff \xi\subset \overline\eta$ for any other orbit $\eta$. We show that for many interesting cases, e.g. the quiver representations of Dynkin type positivity holds for all orbits. In other words in these cases the incidence classes completely determine the hierarchy of the orbits. We also study the case of singularities where positivity doesn't hold for all orbits.
\end{abstract}

\maketitle

\section{Introduction}
Suppose that a complex algebraic representation $\rho:G\to \gl(V)$ of a complex algebraic Lie group $G$ is given and we want to understand the hierarchy of orbits: which orbit is in the closure of another one. The idea is simple: if $\eta$ and $\xi$ are two orbits then we calculate $[\eta]$---the $G$-equivariant Poincar\'e dual of $\overline\eta$ and restrict it to $\xi$. Since $[\eta]$ is supported on $\overline\eta$, if $\xi$ is disjoint from $\overline\eta$, then the incidence class  $[\eta]|_\xi$ is zero. If the $G$-action is rich enough then we have a chance for the opposite implication.

In this paper 
\begin{itemize}
\item we give a  sufficient condition (positivity) for the Incidence Property of an orbit $\xi$: for any other $G$-invariant subvariety $X\subset V$ we have that  $[X]|_\xi\neq 0 \iff \xi\subset [X]$.
\item We show that for many interesting cases, e.g. the quiver representations of Dynkin type positivity holds for all orbits.
\item  We also study the case of singularities where positivity doesn't hold for all orbits.
\end{itemize}

Our work was inspired by a conjecture of R. Rim\'anyi in \cite{rrthom}, that for singularities of holomorphic maps the incidence classes detect the hierarchy of contact singularity classes.

To study the conjecture we generalized the notion of incidence class to the general group representation setting. Let $\rho:G\to \gl(V)$ be an algebraic representation of the complex algebraic Lie group $G$ on the vector space $V$. If  $X\subset V$ is a $G$-invariant subvariety of codimension $d$ then we can assign a $G$-equivariant cohomology class $[X]\in H^{2d}_G(V)\iso H^{2d}(BG) $. In different areas of mathematics this class has different names e.g.  equivariant Poincar\'e dual, multidegree, Joseph polynomial and---in the case of singularities---Thom polynomial.

Equivariant cohomology shares some properties of ordinary cohomology. For example there is a restriction map: if $Y\subset V$ is another $G$-invariant subset and $\alpha\in H^{*}_G(V)$ then $\alpha|_Y\in H^{*}_G(Y)$. So we can define the incidence class $[X]|_Y$ measuring the ``closeness" of $X$ and $Y$.

The crucial observation is that if $\xi$ is an orbit then $[X]|_\xi$ is an equivariant Poincar\'e dual itself: Let $x\in \xi$ and let us denote by $G_x$ the maximal compact subgroup of the stabilizer group of $x$. Then  $[X]|_\xi\in H^{*}_G(\xi)\iso H^{*}_{G_x}(pt) $. Choosing a $G_x$-invariant normal space $N_x$ to $\xi$ at $x$ we will show that 

{\bf Proposition \ref{inc=tp}.} \ \ $[X]|_\xi=[X\cap N_x]_{G_x}$.

This observation reduces the problem to finding a sufficient condition for a representation having the property that all non-empty $G$-invariant subvariety has a nonzero equivariant Poincar\'e dual. This condition can be given in terms of the {\em weights} of the representation. Suppose that $T\subset G$ is a maximal torus. Then $V$ splits into 1-dimensional representations of $T$ with weights $w_i\in \check T=\Hom(T,U(1))$. If $T$ has rank $r$ then $\check T\iso \Z^r$.

{\bf Definition.} The representation $\rho:G\to \gl(V)$ is {\em positive} if the convex hull of its weights doesn't contain zero.

{\bf Theorem \ref{main}.} {\it If the representation $\rho:G\to \gl(V)$ is positive then all non-empty $G$-invariant subvariety has a nonzero equivariant Poincar\'e dual.}

From this we immediately get:

{\bf Theorem \ref{all}.} {\it All orbits of the representation $\rho:G\to \gl(V)$ have the Incidence Property if for all $x\in V$ the normal representation of $G_x$ is positive (normal positivity).} 

In section \ref{sec:examples}. we show a simple condition which implies normal positivity and show that e.g. every quiver representation of Dynkin type satisfy this condition. 

After this generalization we faced the problem, familiar to many mathematicians, that our theory doesn't apply to the original conjecture. Most orbits of the representations corresponding to the case of singularities are not positive. Many times it has a trivial reason: the stabilizer is trivial. Even if we restrict ourselves to the orbits having at least a $U(1)$ symmetry, there are plenty of non positive examples. Nonetheless, calculations show that in many cases incidence still detects the hierarchy of orbits. We list some examples, counterexamples and conjectures.

The authors thank M. Kazarian and R. Rim\'anyi for valuable discussions.

\section{The equivariant Poincar\'e dual and the Incidence class} \label{epd&inc}

We define equivariant cohomology of a $G$-space $X$ via the Borel construction and use  cohomology with rational coefficients: $H^*_G(X):=H^*(EG\times_G X;\Q)$, where $EG\to BG$ is the universal principal $G$-bundle over the classifying space $BG$ of $G$. 

We will frequently use the following simple properties of equivariant cohomology:
\begin{enumerate}
\item $H^*_G(V)\iso H^*_G(pt)$ if $V$ is contractible.
\item $H^*_G(X)\iso H^*_K(X)$, where $K$ is a maximal compact subgroup of $G$. More generally it is true if $K$ is a deformation retract of $G$.
\item We can restrict to an invariant subspace, or to a subgroup. We can also combine them. If the subspace $Y\subset X$ is invariant for a subgroup $S<G$, then we have a restriction map $H^*_G(X)\to H^*_S(Y)$.
\end{enumerate}

If $X$ is a smooth (always complex in this paper) algebraic variety (typically a complex vector space in this paper) and $Y$ is a $G$-invariant subvariety then we can assign a class $[Y]\in H^*_G(X)$. For  this class not only the names are numerous (some listed in the Introduction) but the definitions, too. These definitions are equivalent for the algebraic setting when the acting group is an algebraic torus. For an account of these results see \cite{patak}. Our definition is based on the following fact in ordinary cohomology (see  e. g.\cite{Fulton-Young-tableaux}, page 219):

\begin{proposition}[{\bf Definition}] If $X$ is a smooth algebraic variety and $Y$ is an irreducible subvariety of complex codimension $d$ then there is a unique element $[Y]\in H^*(X)$ such that
  \begin{enumerate}
  \item $[Y]$ is supported on $Y$, i. e. $[Y]$ restricted to $X\setminus Y$ is zero,
  \item $[Y]|_{X\setminus\sing Y}=[Y^o\subset (X\setminus\sing Y)]$.
  \end{enumerate}
Here $\sing Y$ denotes the singular subvariety of $Y$ and $Y^o=Y\setminus \sing Y$. The cohomology class $[Y^o\subset (X\setminus\sing Y)]$ is defined by extending the Thom class of a tubular neighbourhood of the proper submanifold $Y^o\subset (X\setminus\sing Y)$ via excision. If $Y$ has many components (of the same codimension) then we take the sum.
\end{proposition}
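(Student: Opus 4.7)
The strategy is to construct $[Y]$ by extending the already-defined class $[Y^o\subset X\setminus\sing Y]\in H^{2d}(X\setminus\sing Y)$ to a class in $H^{2d}(X)$ via the pullback map along the open inclusion $X\setminus\sing Y\hookrightarrow X$, which I claim is an isomorphism in this degree. Once this is established, both the existence and the uniqueness assertions of the proposition fall out immediately, and condition (1) becomes automatic.

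The crucial input is the observation that, because $Y$ is irreducible of codimension $d$, its singular locus $\sing Y$ is a proper closed subvariety of $Y$ and therefore has complex codimension at least $d+1$ in $X$. I would then establish the vanishing
\[
H^{k}_{\sing Y}(X)=0\quad\text{for every}\ k\leq 2d+1,
\]
where $H^{*}_{Z}(X):=H^{*}(X,X\setminus Z)$ denotes cohomology supported on $Z$. Plugging this into the long exact sequence
\[
H^{2d}_{\sing Y}(X)\to H^{2d}(X)\to H^{2d}(X\setminus\sing Y)\to H^{2d+1}_{\sing Y}(X)
\]
of the pair yields the desired isomorphism $H^{2d}(X)\xrightarrow{\sim}H^{2d}(X\setminus\sing Y)$. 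Define $[Y]$ to be the unique preimage of $[Y^o\subset X\setminus\sing Y]$: condition (2) holds by construction, and any class satisfying (2) has the same restriction and hence coincides with $[Y]$, giving uniqueness. For condition (1), the class $[Y^o\subset X\setminus\sing Y]$ is, by construction, the image of a Thom class in $H^{2d}_{Y^o}(X\setminus\sing Y)$, so it vanishes on $(X\setminus\sing Y)\setminus Y^o=X\setminus Y$; therefore $[Y]$ also vanishes on $X\setminus Y$, as required. The reducible case stated at the end of the proposition follows by applying the irreducible case to each component and summing.

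The main obstacle is justifying the vanishing of $H^k_{\sing Y}(X)$ in degrees $k\leq 2d+1$, which is delicate precisely because $\sing Y$ may itself be singular. My plan is to stratify $\sing Y$ as a finite disjoint union of smooth locally closed complex algebraic subvarieties, each of complex codimension $\geq d+1$ in $X$, and to induct on the number of strata. At the inductive step one uses the excision long exact sequence for a closed subset $Z'\subset\sing Y$ with smooth complement $\sing Y\setminus Z'$ in $X\setminus Z'$; on such a smooth stratum $S$ of complex codimension $c\geq d+1$, the Thom isomorphism gives $H^{k}_{S}(X\setminus Z')\cong H^{k-2c}(S)$, which vanishes for $k<2c\leq 2d+2$. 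Assembling these vanishings along the stratification produces the claimed local-cohomology vanishing, after which the remainder of the argument is a formal diagram chase.
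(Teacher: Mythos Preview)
Your argument is correct. The paper itself does not prove this proposition: it is stated as a standard fact in ordinary cohomology with a reference to Fulton's \emph{Young Tableaux}, page 219, and is used only as the basis for the subsequent equivariant definition. So there is no proof in the paper to compare against.

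Your strategy---showing that $H^{2d}(X)\to H^{2d}(X\setminus\sing Y)$ is an isomorphism by proving $H^k_{\sing Y}(X)=0$ for $k\le 2d+1$ via a stratification of $\sing Y$ into smooth pieces of complex codimension $\ge d+1$ and the Thom isomorphism on each stratum---is exactly the standard way to justify this fact, and all the steps go through as you describe. In particular, your observation that uniqueness follows from condition (2) alone (since the restriction is an isomorphism in degree $2d$) is a slight strengthening of what is asserted; condition (1) is then a consequence rather than an independent constraint. The only small point worth making explicit is that the complex structure guarantees the orientability needed for the Thom isomorphism on each stratum, but this is routine.
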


This definition can be extended to stratified spaces (stratified by complex submanifolds). For a variety we take the singular stratification: $Y\setminus \sing Y$, $\sing Y\setminus \sing(\sing Y)$ and so on. Now we define the the equivariant Poincar\'e dual as a $G$ characteristic class (see \cite{kaza-char}):
\begin{theorem}[{\bf Definition}]  Let $\rho:G\to \gl(V)$ be an algebraic representation and $Y\subset V$ is a $G$-invariant irreducible subvariety of complex codimension $d$, then there is a unique element $[Y]\in H^*_G(V)$ such that for all algebraic $\rho$-bundle $\pi:E\to M$ over a smooth  algebraic variety $M$ with classifying map $k:M\to BG$
       \[ [Y(E)\subset E]=\pi^*k^*[Y], \]
where $Y(E)=P\times_G Y$ for $P$ denoting the principal $G$-bundle of $E$.
  \end{theorem}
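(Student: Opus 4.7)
The plan is to construct $[Y]$ using finite-dimensional algebraic approximations of the universal principal $G$-bundle and to read off both the characterizing relation and the uniqueness from their classifying property.

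First I would invoke the Totaro--Edidin--Graham construction: for the complex algebraic group $G$ there is a sequence of smooth quasi-projective varieties $B_n$ with algebraic principal $G$-bundles $E_n\to B_n$ such that $H^i(B_n)\iso H^i(BG)$ for $i\le n$. For $n\ge 2d$ the associated bundle $\pi_n:E_n\times_G V\to B_n$ is smooth algebraic and contains the irreducible subvariety $Y_n:=E_n\times_G Y$ of complex codimension $d$, to which the preceding Proposition--Definition attaches a class $[Y_n]\in H^{2d}(E_n\times_G V)$. Since $V$ is contractible, $\pi_n$ is a homotopy equivalence, and the approximation property gives canonical isomorphisms $H^{2d}(E_n\times_G V)\iso H^{2d}(B_n)\iso H^{2d}_G(V)$. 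I would define $[Y]$ to be the common image of the $[Y_n]$ under these identifications; independence of $n$ is a naturality check, since for $n<m$ the maps between approximations can be chosen to pull back $E_m\times_G Y$ to $E_n\times_G Y$ transversely, and ordinary Poincar\'e duals are compatible with such pullbacks.

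Next I would verify the characterizing property. Let $\pi:E\to M$ be an algebraic $\rho$-bundle with classifying map $k:M\to BG$. For $n$ sufficiently large the map $k$ is represented up to homotopy by an algebraic $k_n:M\to B_n$ with $E\iso k_n^*(E_n\times_G V)$ and $Y(E)=k_n^*(E_n\times_G Y)$; after perturbing $k_n$ to be transverse to $Y_n$ on a dense open subset, the naturality of the non-equivariant Poincar\'e dual yields $[Y(E)\subset E]=\pi^*k_n^*[Y_n]=\pi^*k^*[Y]$. Uniqueness is then automatic: any class $\alpha\in H^{*}_G(V)$ satisfying the universal property must, when tested against the approximating bundles $\pi_n$, reproduce $[Y_n]$ under the identification above, and these restrictions already determine $\alpha$ in every fixed degree.

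The main obstacle is the algebraic--geometric input rather than the formal cohomological bookkeeping: one needs the existence of the algebraic approximations $B_n$ of $BG$, the compatibility of the ordinary Poincar\'e dual with the pullback maps connecting them, and the transversality/general-position argument that makes $Y(E)=k_n^*(E_n\times_G Y)$ compute the right cohomology class. Once these standard but nontrivial facts are in hand, the construction and its universal property follow formally.
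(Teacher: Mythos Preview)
The paper does not supply its own proof of this statement; it is stated as a definition with a reference to Kazarian's characteristic-class framework, so there is no in-paper argument to compare against. Your task is really to justify the definition from scratch, and the Totaro--Edidin--Graham approximation scheme you chose is the standard modern route and is essentially correct.

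One technical point deserves care. You assert that for large $n$ the classifying map $k:M\to BG$ is represented up to homotopy by an \emph{algebraic} morphism $k_n:M\to B_n$, which you then perturb to be transverse to $Y_n$. The existence of such an algebraic lift is not automatic, and this is not how the argument is usually run. The clean substitute is the double fibration
\[
M \xleftarrow{\ p\ } P\times_G E_n \xrightarrow{\ q\ } B_n,
\]
where $p$ is an $E_n$-bundle (hence an isomorphism on $H^{\le n}$) and $q$ is induced by the second projection. Over $P\times_G E_n$ the $V$-bundles $p^*E$ and $q^*(E_n\times_G V)$ are canonically identified with $(P\times E_n\times V)/G$, and under this identification both $p^{-1}Y(E)$ and $q^{-1}Y_n$ equal $(P\times E_n\times Y)/G$. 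Since $p$ and $q$ are smooth submersions, ordinary Poincar\'e duals pull back on the nose, so $p^*\bigl((\pi^*)^{-1}[Y(E)]\bigr)=q^*\bigl((\pi_n^*)^{-1}[Y_n]\bigr)$; inverting $p^*$ in low degrees gives exactly $[Y(E)\subset E]=\pi^*k^*[Y]$. No algebraic lift of $k$ and no transversality perturbation are needed. With this adjustment your outline---construction of $[Y]$ as the stable value of $[Y_n]$, well-definedness by naturality under the maps between approximations, and uniqueness by testing against the $\pi_n$---goes through as written.
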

If the group action is not clear from the context we will use the notation $[Y]_G$ for the $G$-equivariant Poincar\'e dual.

Notice that the class $[Y]$ is localized at 0 in the sense that it depends only on the germ of $Y$ at 0. It follows from the fact that the restriction map $H^*_G(V)\to H^*_K(U)$ is an isomorphism for any $K$-invariant contractible neighbourhood $U$ of 0. Here we use the maximal compact subgroup $K$ to ensure the existence of small invariant contractible neighbourhoods.

Suppose now that $\xi$ is an orbit of $V$. Then we have the restriction map $|_\xi:H^*_G(V)\to H^*_G(\xi)$. Observe that $H^*_G(\xi)\iso H^*_{G_x}(pt)$, where $x\in \xi$ and $G_x$ is the (maximal compact subgroup of the) stabilizer group of $x$. We can make this more explicit if we restrict to the point $x$, which is naturally a $G_x$-space.

It is not difficult to calculate the map $H^*_G(pt)\to H^*_{G_x}(pt)$. Let $T<G$ be a maximal torus of rank $r$. Then by the Borel injectivity theorem the restriction map $H^*_G(pt)\to H^*_T(pt)$ is injective. (This is valid only with rational coefficients. That is the reason we use cohomology with rational coefficients.) So we can identify a class in $H^*_G(pt)$ with a polynomial $p\in \Q[\alpha_1,\dotsc,\alpha_r]\iso H^*_T(pt)$. We can choose $x\in \xi$ and a maximal torus $T_x<G_x$ such that $T_x<T$. Then restriction of $p$ is a linear substitution in the variables $\beta_1,\dotsc,\beta_s$, where $H^*_{T_x}(pt)\iso \Q[\beta_1,\dotsc,\beta_s]$. The substitution is determined by the inclusion $T_x\hookrightarrow T$. 

    \section{The Incidence class as an equivariant Poincar\'e dual} \label{sec:inc=epd}
Since $G_x$---the maximal compact subgroup of the stabilizer group of $x$---is a compact group, we can choose a  $G_x$-invariant normal space $N_x<T_xV$ such that $N_x\oplus T_x\xi=T_xV$. We have the exponential map $e_x:N_x\to V$ which is transversal to $\xi$. The map $e_x$ is $G_x$-invariant so it induces a homomorphism $H^*_G(V)\to H^*_{G_x}(N_x)$.

\begin{proposition} \label{inc=tp} Let $Y\subset V$ be a $G$-invariant subvariety, then $[e_x^{-1}(Y)]=e_x^*[Y]$, or, with some abuse of notation $[Y\cap N_x\subset N_x]=[Y]|_\xi$.  \end{proposition}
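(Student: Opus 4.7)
My plan is to reduce the statement to a standard transverse-pullback formula for equivariant Poincar\'e duals, after first making precise the ``abuse of notation'' in the second formulation. The orbit $\xi=G/G_x$ admits a $G$-equivariant tubular neighbourhood $U\subset V$ that is $G$-equivariantly diffeomorphic to the associated bundle $G\times_{G_x}N_x$, with $\xi$ corresponding to the zero section and $e_x$ corresponding to the inclusion of the fiber over $[e]\in G/G_x$ followed by $U\hookrightarrow V$. The standard computation
\[ H^*_G(G\times_{G_x}N_x)\cong H^*_{G_x}(N_x) \]
is induced by restricting to the fiber, and since $N_x$ is $G_x$-equivariantly contractible to $0$, a further restriction gives $H^*_{G_x}(N_x)\cong H^*_{G_x}(pt)\cong H^*_G(\xi)$; under this chain the class $e_x^*[Y]$ corresponds to $[Y]|_\xi$. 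It therefore suffices to establish $e_x^*[Y]=[e_x^{-1}(Y)\subset N_x]_{G_x}$.

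For this equality I would invoke the uniqueness characterisation of the equivariant Poincar\'e dual, checking that $e_x^*[Y]$ satisfies the two defining properties of $[e_x^{-1}(Y)]_{G_x}$: its support lies in $e_x^{-1}(Y)$, since pullback preserves support; and on the complement of $\sing(e_x^{-1}(Y))$ it coincides with the Thom class of the smooth preimage. Both properties reduce, via the inductive extension of $[Y]$ over the singular stratification $Y\supset \sing Y\supset \sing\sing Y\supset\dotsb$, to transversality of $e_x$ with each stratum and to naturality of the Thom class under transverse pullback.

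Transversality is exactly where the $G$-invariance of $Y$ enters. Each stratum $S$ is $G$-invariant, hence a union of $G$-orbits; if $S$ meets $\xi$ then $\xi\subset S$, so $T_x\xi\subset T_xS$, and combined with the defining relation $T_x\xi\oplus N_x=T_xV$ this yields $T_xS+N_x=T_xV$. Thus $e_x$ is transverse to $S$ at $0\in N_x$, hence on a $G_x$-invariant open neighbourhood of $0$. The main obstacle is that transversality may fail globally on $N_x$; I would deal with this by exploiting that the equivariant Poincar\'e dual depends only on the germ at $0$ (the same localisation property used to define $[Y]$ itself), so we may shrink $N_x$ to any $G_x$-equivariantly contractible invariant neighbourhood of $0$ without changing either side of the claimed identity. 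On such a neighbourhood the transverse-pullback formula applies directly and yields the desired equality.
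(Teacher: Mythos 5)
Your overall strategy---reduce, via the germ-at-$0$ localisation, to a transverse-pullback formula for equivariant Poincar\'e duals and then verify transversality of $e_x$ to the strata of $Y$---is the same as the paper's, and the bookkeeping with the tubular neighbourhood $G\times_{G_x}N_x$ is fine. The gap is in the transversality step. Your argument only treats strata $S$ that meet $\xi$: for those you get transversality at $0\in N_x$ and hence, since transversality to a fixed submanifold is an open condition, on a neighbourhood of $0$. But you say nothing about strata with $x\in\overline{S}\setminus S$, and these are exactly the important ones: when $x$ is a singular point of $Y$ (e.g.\ $Y=\overline\eta$ and $\xi\subset\overline\eta\setminus\eta$, the situation the proposition is designed for), the open stratum $Y^o=Y\setminus\sing Y$ does not contain $x$ but meets $e_x(N_x)$ at points arbitrarily close to $0$. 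For such a stratum transversality at $0$ is vacuous, and openness gives nothing; to pass from ``transverse at $x$ to the stratum through $x$'' to ``transverse near $x$ to all incident strata'' one needs a regularity hypothesis on the stratification (Whitney condition (a)), which you neither assume nor verify, and which the naive singular-set filtration used here is not guaranteed to satisfy.

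The paper closes exactly this hole by using the group action globally rather than stratum by stratum: the action gives a bundle map $\varphi:\lie(G)\times V\to TV$ over all of $V$, with $\im(\phi_y)=T_y(Gy)$ for every $y\in V$. Since $e_x$ is transverse to $\im(\phi_x)=T_x\xi$ at $0$ and $\varphi$ is continuous, $e_x$ is transverse to $\im(\phi_{e_x(q)})$ for all $q$ in a neighbourhood of $0$ --- this is an honest open condition, because surjectivity is tested against a continuously varying family of subspaces of $TV$ defined everywhere, not against a submanifold that degenerates near $x$. Then for any $G$-invariant stratum $S$ and any point $y=e_x(q)\in S$ in that neighbourhood one has $T_yS\supset\im(\phi_y)$, so transversality to $S$ at $q$ follows, whether or not $x\in S$. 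If you replace your at-$x$ computation by this orbit-tangent argument, the rest of your write-up goes through.
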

\begin{proof} It is enough to show that $e_x$ is transversal to $Y$ in a neighbourhood of $x$. On transversality we mean that $e_x$ is transversal to {\em every} stratum of $Y$ (see e.g. \cite[\S 2.2]{patak}). 

The action of $G$ defines a bundle map $\varphi: \lie(G)\times V\to TV$. At $x=e_x(0)$ the map $e_x$ is transversal to $\im(\phi_x)$, so transversality also holds in an open neighbourhood. But $\im(\phi_y)$ is the tangent space of the orbit of $y$, so in this neighbourhood $e_x$ is transversal to any orbit of $G$, therefore to any $G$-invariant submanifold, in particular to the strata of $Y$. 
\end{proof}

     \section{The positivity condition} \label{sec:poscond}
In this section we give a condition which implies that every non-empty $G$-invariant subvariety $Y$ has a non-zero equivariant Poincar\'e dual. 

A simple condition is if the representation $\rho: G\to \gl(V)$ ``contains the scalars" , i.e. the scalars of $\gl(V)$ are in the image of $\rho$. In this case $Y$ is automatically a cone and its projective degree can be calculated from $[Y]$ by a substitution (see e.g. \cite{dual}) so it is never 0. 

But the stabilizer group of $x\in V$ typically doesn't  contain the scalars, so we need a more general condition. By the Borel injectivity theorem we can assume that $G=\T$ is a complex torus. This is the point where we use that $G$ is a complex group, so its maximal torus is contained in a complex torus of the same rank. The real torus is too small, it always has orbits with zero equivariant Poincar\'e dual. Let us denote the set of weights by $W_\rho$.

\begin{theorem} \label{convex} For a representation $\rho:\T\to \gl(V)$ of the complex torus $\T\iso \gl(1)^r$ the following conditions are equivalent
  \begin{enumerate}
  \item for all non-empty $\T$-invariant subvariety $Y\subset V$ the class $[Y]\in H^*_{\T} (V)$ is non-zero.
  \item The convex hull of $W_\rho$ doesn't contain 0.
  \end{enumerate}
\end{theorem}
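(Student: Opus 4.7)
The plan is to prove both implications directly. For $(2) \Rightarrow (1)$, I would use hyperplane separation. Since $W_\rho \subset \Z^r$ is finite and its convex hull avoids $0$, there exists a rational linear functional $\ell : \R^r \to \R$ with $\ell(w) > 0$ for every $w \in W_\rho$. After clearing denominators, $\ell$ corresponds to a one-parameter subgroup $\lambda : \C^* \to \T$ under which every weight of $V$ becomes a strictly positive integer. The induced substitution $\lambda^*: H^*_\T(pt) \to H^*_{\C^*}(pt)$ sends $[Y]_\T$ to $[Y]_{\C^*}$, so it suffices to prove $(1)$ in the special case where $\C^*$ acts on $V = \C^n$ with all weights $a_i > 0$. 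In that setting $\lim_{t \to 0} t \cdot v = 0$ for every $v \in V$, so every closed invariant $Y$ is a cone with vertex $0$. Its class $[Y]_{\C^*} \in H^*_{\C^*}(pt) \cong \Q[t]$ is homogeneous of degree $\codim_V Y$ and equals either the equivariant top Chern class $\prod_{i=1}^n a_i \cdot t^n$ (when $Y = \{0\}$), or the weighted projective degree of $\bar Y := (Y \setminus \{0\})/\C^* \subset \mathbf{P}(a_1, \ldots, a_n)$ times $t^{\codim Y}$; in both cases this is a positive rational multiple of $t^{\codim Y}$, hence nonzero.

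For $(1) \Rightarrow (2)$, I argue the contrapositive. If $0$ lies in the convex hull of $W_\rho$, there exist non-negative integers $n_i$, not all zero, with $\sum n_i w_i = 0$; choosing for each $i$ a coordinate $x_i$ of weight $w_i$, the monomial $f := \prod x_i^{n_i}$ is a non-constant $\T$-invariant polynomial. Viewing $f$ as a $\T$-equivariant map $V \to \C$ with trivial $\T$-action on the target, for a generic regular value $c \neq 0$ the fiber $Y_c := f^{-1}(c)$ is a non-empty, smooth, $\T$-invariant subvariety of codimension $1$. Transversality at the regular value $c$ gives $[Y_c]_\T = f^*[\{c\}]_\T$. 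Since $\T$ acts trivially on $\C$, K\"unneth identifies $H^*_\T(\C) \cong H^*(\C) \otimes H^*_\T(pt)$, in which the point class lives in the factor $H^2(\C) = 0$ and therefore vanishes. Hence $[Y_c]_\T = 0$ while $Y_c$ is non-empty and $\T$-invariant, contradicting $(1)$.

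The main obstacle is the nonvanishing step inside $(2) \Rightarrow (1)$: showing that the $\C^*$-equivariant class of an invariant cone with all positive weights is nonzero, which ultimately rests on the positivity of a weighted-projective degree. An alternative route avoiding weighted projective spaces is a $\C^*$-invariant (Gr\"obner) flat degeneration of $Y$ to a union $\bigcup_\alpha L_\alpha$ of coordinate subspaces with positive multiplicities $m_\alpha$, giving $[Y]_{\C^*} = \sum_\alpha m_\alpha \prod_{x_i \notin L_\alpha} (a_i t)$, a sum of strictly positive multiples of $t^{\codim Y}$. The remaining ingredients — hyperplane separation, the passage from $0 \in \mathrm{conv}(W_\rho)$ to an invariant monomial, and the vanishing $[\{c\}]_\T = 0$ under trivial action — are essentially formal.
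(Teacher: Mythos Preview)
Your proof is correct and closely parallels the paper's. For $(2)\Rightarrow(1)$ the paper invokes its Theorem~\ref{tpposinweights} (the multidegree expression of $[Y]$ as a nonnegative-coefficient polynomial in the weights) and then evaluates at a point making all weights positive; your hyperplane-separation reduction to a $\C^*$ with positive weights is the same idea, and your ``alternative route'' via flat degeneration to coordinate subspaces is exactly the content of Theorem~\ref{tpposinweights}.

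For $(1)\Rightarrow(2)$ both arguments build the same invariant monomial $f=\prod x_i^{n_i}$ and take a nonzero level set, but the vanishing is justified differently: the paper uses that the equivariant class is localized at $0$ (so $0\notin f^{-1}(1)$ forces $[f^{-1}(1)]=0$), whereas you use the transversal pullback $[f^{-1}(c)]_\T=f^*[\{c\}]_\T$ together with the observation that under the trivial $\T$-action on $\C$ one has $[\{c\}]_\T=[\{c\}]\otimes 1\in H^2(\C)\otimes H^*_\T(pt)=0$. Both are valid; the localization argument is a touch shorter, while yours avoids appealing to that property.
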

This theorem was first proved in \cite[Thm 5.2.1.]{patak}. Independently it was noticed in \cite{knutson-shimozono} that (2) $\Longrightarrow$ (1) follows from \cite[Thm D.]{knutson-miller}. Here we provide a short direct proof. We refer to condition (ii) as {\em positivity} since it means that there is a linear functional $\lambda$ on $\Z^r$ such that $\lambda(w)>0$ for all $w\in W_\rho$. another possible name would be instability as positivity is equivalent to the condition that all points of $V$ are unstable in the GIT sense.

We need one more property of the equivariant Poincar\'e dual (see \cite[prop 4.1]{patak}) to prove Theorem \ref{convex}.

\begin{theorem} \label{tpposinweights} If \,$Y\subset V$ is a $\T$-invariant subvariety then the class $[Y]\in H^*_{\T}(V)$ can be expressed as a non-zero polynomial of the weights of $\rho$ with non-negative integer coefficients.
\end{theorem}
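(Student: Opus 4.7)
Since $H^*_{\T}(V)\iso \Q[\alpha_1,\dotsc,\alpha_r]$ and each weight $w\in W_\rho$ is a $\Z$-linear form in the $\alpha_i$, the plan is to exhibit $[Y]$ as a formal non-negative integer combination of monomials in the weights. The natural strategy is a Gr\"obner / flat degeneration of $Y$ to a $\T$-invariant union of coordinate subspaces, together with the invariance of the equivariant Poincar\'e dual under such degenerations.

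First I would choose a generic one-parameter subgroup $\lambda:\gl(1)\to \T$, i.e.\ a cocharacter pairing to pairwise distinct integers against distinct weights of $\rho$. Picking weight-vector coordinates $x_1,\dotsc,x_n$ on $V$, this induces a $\Z$-grading of $\C[V]$. The Rees construction then produces a $\T$-equivariant flat family $\mathcal Y\subset V\times \C$ over $\C$ whose generic fibre is $Y$ and whose special fibre $\mathcal Y_0$ is cut out by the initial ideal $\mathrm{in}_\lambda(I(Y))$. By the genericity of $\lambda$ this initial ideal is a monomial ideal, hence $\mathcal Y_0$ is, as a scheme, a union of $\T$-invariant coordinate subspaces $L_1,\dotsc,L_k$ with positive integer multiplicities $m_1,\dotsc,m_k$.

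Next I would invoke invariance of the equivariant Poincar\'e dual under flat degenerations of $\T$-invariant subvarieties of $V$, yielding $[Y]=[\mathcal Y_0]=\sum_i m_i[L_i]$ in $H^*_{\T}(V)$. For each coordinate subspace $L_i$ the equivariant normal bundle decomposes into $1$-dimensional weight spaces with weights forming a multiset $W_i\subset W_\rho$ (the weights of the complementary coordinate directions), and a direct computation of the Thom class of a complex linear representation of $\T$ gives $[L_i]=\prod_{w\in W_i} w$, a single monomial in the weights. Substituting yields
$$[Y]=\sum_{i=1}^{k} m_i\prod_{w\in W_i} w,$$
which is manifestly a polynomial in the weights with non-negative integer coefficients, and is non-zero as a formal polynomial because at least one top-dimensional component of $\mathcal Y_0$ appears with positive multiplicity $m_i$.

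The hard part is justifying the equality $[Y]=[\mathcal Y_0]$ under the flat degeneration. In an algebraic formulation---where $[Y]$ is the multidegree extracted from the multigraded Hilbert series, or a $K$-theoretic push-forward---this invariance is essentially automatic from the constancy of Hilbert polynomials in flat families. In the topological formulation used in this paper, where $[Y]$ is defined via a Thom class of a tubular neighbourhood, one instead has to appeal to the equivalence of these definitions (as referenced in \cite{patak}) to import the invariance, or else verify it directly by deforming the topological Thom classes through the one-parameter subgroup $\lambda$. This is the only step that is not routine.
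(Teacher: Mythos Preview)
Your proposal is correct and follows essentially the same route as the paper. The paper does not give a self-contained argument either: it invokes the identification of the equivariant Poincar\'e dual with the \emph{multidegree} (via equivariant Chow theory and the cycle map, citing \cite{patak}, \cite{MR1614555}, \cite{four}), then cites as a ``basic fact about multidegree'' that it decomposes as a positive-integer sum of multidegrees of $\T$-invariant coordinate subspaces, each of which is a product of weights. Your Gr\"obner/Rees degeneration is exactly the mechanism behind that ``basic fact,'' and the step you flag as non-routine---the invariance of $[Y]$ under the flat degeneration, equivalently the equality of the topological class with the multidegree---is precisely the step the paper also outsources to \cite{patak}.
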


Theorem \ref{tpposinweights} immediately follows from the fact---well known and widely used by the specialists---that for complex torus actions the notions of equivariant Poincar\'e dual and {\em multidegree} coincide. The reason we refer to \cite{patak} is that we haven't found an older proof in the literature. The proof is based on the identification of the multidegree to the equivariant intersection class \cite{MR1614555}, and then the later to the equivariant Poincar\'e dual. See \cite{four}, page 255, for the first equivalence. As for the second one, there is a cycle map from the equivariant Chow group to the equivariant cohomology ring of $V$ (\cite{MR1614555}, page 605). Scrutinizing this map, it turns out that it is in fact the ordinary cycle map of the product of some projective spaces, for which it is easy to prove that it is an isomorphism. A basic fact about multidegree is that the multidegree of a $\T$-invariant subvariety is equal to the sum of multidegrees of $\T$-invariant sub(vector)spaces (with possible positive integer multiplicities). Noticing that the  equivariant Poincar\'e dual of a subspace is the product of some weights of $\rho$ we proved Theorem \ref{tpposinweights}. Here we use the identification of a weight $w:\T\to GL(1)$ with the first Chern class $c_1(L_w)$ of the line bundle $L_w=E\T\times_w\C$.

 \begin{proof}[Proof of Theorem \ref{convex}.] We can think of the weights as linear functionals on $\R^r$---the real part of the Lie algebra of the torus $\T$. If the convex hull of $W_\rho$ doesn't contain 0 then there is a substitution $\alpha_i\mapsto z_i\in\R$ such that for all $w(\alpha_1,\dotsc,\alpha_r)\in W_\rho$ we have that $w(z_1,\dotsc,z_r)>0$. By Theorem \ref{tpposinweights} the class $[Y]$ is a non-trivial linear combination of monomials of the weights of $\rho$ with non-negative coefficients, so if we apply the same substitution we get a positive number, which implies that $[Y]=p(\alpha_1,\dotsc,\alpha_r)\neq 0$. 

On the other hand, if 0 is in the convex hull, then there is a non-trivial linear combination $\sum n_iw_i=0$ with $n_i\geq 0$ integers. It implies that $p=\prod x_i^{n_i}$ is a non-zero invariant polynomial (the coordinate $x_i$ corresponds to the one-dimensional eigenspace with weight $w_i$). Then $Y:=\{x\in V:p(x)=1\}$ is a non-empty invariant subvariety. We have that $0\not\in Y$ so by the fact that $[Y]$ is localized to 0 we get that $[Y]=0$.
 \end{proof}

 Theorem \ref{convex} immediately implies
\begin{theorem}\label{main} If the representation $\rho:G\to \gl(V)$ is positive then all non-empty $G$-invariant subvariety has a nonzero equivariant Poincar\'e dual.  
\end{theorem}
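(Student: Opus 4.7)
The plan is to reduce the statement to the torus case already handled by Theorem~\ref{convex}. The key observation is that the positivity hypothesis on $\rho:G\to\gl(V)$ is defined in terms of the weights with respect to a maximal torus $T\subset G$, so it is really a statement about the restricted representation $\rho|_T:T\to\gl(V)$. Accordingly, I would start by fixing a maximal torus $T$ inside $G$, and noting that any $G$-invariant subvariety $Y\subset V$ is in particular $T$-invariant. Then Theorem~\ref{convex}, applied to the positive $T$-representation $V$, yields $[Y]_T\neq 0$ in $H^*_T(V)$.

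Next I would upgrade non-vanishing from $T$-equivariant to $G$-equivariant cohomology via the Borel injectivity theorem. Since $V$ is contractible one has $H^*_G(V)\cong H^*_G(pt)$ and $H^*_T(V)\cong H^*_T(pt)$, and Borel injectivity gives an injection $H^*_G(pt)\hookrightarrow H^*_T(pt)$ (this is where rational coefficients are essential and is already invoked in the excerpt). The resulting restriction map $H^*_G(V)\to H^*_T(V)$ is therefore injective, so it suffices to check that the image of $[Y]_G$ under this restriction is the class $[Y]_T$ computed torus-equivariantly.

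That compatibility is the one slightly delicate point, but it follows from the uniqueness characterization used to define equivariant Poincar\'e duals: the image of $[Y]_G$ in $H^*_T(V)$ satisfies the naturality relation with respect to every $\rho|_T$-bundle obtained by restricting a $\rho$-bundle, and pulling $[Y]_T$ back along the universal $\rho|_T$-bundle gives the same characteristic-class recipe. Hence $[Y]_G|_T=[Y]_T\neq 0$, and injectivity of the restriction forces $[Y]_G\neq 0$, which is the claim.

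I expect the only real obstacle to be the bookkeeping in the last paragraph, namely convincing oneself that restriction along $T\hookrightarrow G$ intertwines the two equivariant Poincar\'e duals; this is standard but deserves a sentence. Everything else, including the reduction to the torus and the use of Borel injectivity, is immediate from the material already set up in the excerpt.
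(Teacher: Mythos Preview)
Your proposal is correct and is precisely the argument the paper has in mind: the paper explicitly sets up the Borel injectivity reduction to a (complex) torus just before stating Theorem~\ref{convex}, and then simply records that ``Theorem~\ref{convex} immediately implies'' Theorem~\ref{main}. Your write-up merely unpacks that one-line implication, including the compatibility $[Y]_G|_T=[Y]_T$, which the paper leaves implicit.
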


Let us recall now the definition of Incidence Property from the Introduction.
\begin{definition} \label{def:ip} Given a  representation $\rho:G\to \gl(V)$ we say that an orbit $\xi$ has the Incidence Property if for any other $G$-invariant subvariety $X\subset V$ we have that  $[X]|_\xi\neq 0 \iff \xi\subset [X]$.
\end{definition} 
Proposition \ref{inc=tp} and Theorem \ref{main} now implies that
\begin{theorem}\label{all} All orbits of the representation $\rho:G\to \gl(V)$ have the Incidence Property if for all $x\in V$ the normal representation of $G_x$ is positive (normal positivity).
\end{theorem}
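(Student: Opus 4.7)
The plan is to separate the biconditional $[X]|_\xi \neq 0 \iff \xi \subset X$ into its two directions and reduce each to machinery already built in the paper.

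First I would handle the trivial direction. Suppose $\xi \not\subset X$. Since $X$ is a (closed) $G$-invariant subvariety and $\xi$ is a $G$-orbit, a single intersection point would drag the whole orbit into $X$; hence $\xi \cap X = \emptyset$ and $\xi$ sits inside the open $G$-invariant complement $V \setminus X$. Because $[X]$ is supported on $X$, its image in $H^*_G(V \setminus X)$ vanishes, and factoring the restriction $H^*_G(V) \to H^*_G(\xi)$ through $H^*_G(V \setminus X)$ yields $[X]|_\xi = 0$.

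Next I would handle the substantive direction. Suppose $\xi \subset X$. Fix $x \in \xi$ together with a $G_x$-invariant normal slice $N_x$ and the exponential map $e_x : N_x \to V$ of Section \ref{sec:inc=epd}. Since $x \in X$, the point $0$ lies in $e_x^{-1}(X)$, so $e_x^{-1}(X)$ is a non-empty $G_x$-invariant subvariety of $N_x$. Proposition \ref{inc=tp} identifies $[X]|_\xi$ with $[e_x^{-1}(X)]_{G_x} \in H^*_{G_x}(N_x)$. By the normal positivity hypothesis, the $G_x$-representation on $N_x$ is positive, so Theorem \ref{main} applies to this representation and the equivariant Poincar\'e dual of any non-empty $G_x$-invariant subvariety of $N_x$ is non-zero. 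In particular $[e_x^{-1}(X)]_{G_x} \neq 0$, so $[X]|_\xi \neq 0$.

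The hard part of the theorem is already absorbed into the two results being invoked; what remains is a short chain of implications. The only items needing vigilance are the standard bookkeeping checks: that the non-emptiness of $e_x^{-1}(X)$ really follows from $x \in X$ (immediate from $e_x(0) = x$), and that the $G_x$-representation to which Theorem \ref{main} is applied is literally the normal representation whose positivity is assumed. With those observations in place the theorem is obtained simply by combining Proposition \ref{inc=tp} with Theorem \ref{main}.
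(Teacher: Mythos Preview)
Your proof is correct and follows exactly the route the paper indicates: the paper's own proof is the single sentence ``Proposition \ref{inc=tp} and Theorem \ref{main} now implies that'', and you have simply unpacked that sentence, handling the easy direction via the support property of $[X]$ and the substantive direction via the two cited results.
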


\section{Examples of representations with the Incidence Property} \label{sec:examples}
First we give a stronger condition than positivity, which is easier to check. The examples we have in mind are sub-representations of the left-right action of a  subgroup of $\gl(W)\times \gl(W)$ on the vector space $\Hom(W,W)$. In these cases the weights are of the form $t-s$ where $t$ is a ``target" weight and $s$ is a ``source" weight.

 \begin{proposition} \label{i<j} Suppose that $\rho: G\to \gl(V)$ is a complex representation, $T$ is a maximal torus of $G$ and there are linearly independent elements $e_1,\dotsc,e_s\in \check T$ such that all weights of $\rho$ are of the form $e_i-e_j$ for some $i<j$. Then $\rho$ is positive, i.e. 0 is not in the convex hull of the weights of $\rho$.
 \end{proposition}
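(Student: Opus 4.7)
The plan is to produce an explicit separating linear functional on the weight lattice, which by the equivalence noted after Theorem \ref{convex} (positivity means the existence of $\lambda \in (\Z^r)^*$ with $\lambda(w) > 0$ for every $w \in W_\rho$) is exactly what we need.

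First I would use the hypothesis that $e_1, \dots, e_s \in \check T$ are linearly independent: this means the assignment $e_i \mapsto c_i$, for any chosen real numbers $c_1, \dots, c_s$, extends to a linear functional $\lambda : \check T \otimes \R \to \R$ (pick any completion of the $e_i$ to a basis of $\check T \otimes \R$ and set $\lambda$ to be zero on the additional basis vectors). The only remaining freedom is in the choice of the $c_i$.

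Second I would choose the $c_i$ to be any strictly decreasing sequence, say $c_i = -i$, so that $c_i - c_j = j - i > 0$ whenever $i < j$. Then for every weight $w \in W_\rho$, which by assumption has the form $w = e_i - e_j$ with $i < j$, we obtain
\[
\lambda(w) = \lambda(e_i) - \lambda(e_j) = c_i - c_j = j - i > 0.
\]
Hence the same strict inequality holds for every convex combination of the weights, so $0$ cannot lie in the convex hull of $W_\rho$, proving positivity.

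There is essentially no obstacle here beyond noticing that linear independence of the $e_i$ is exactly what lets us prescribe the values of $\lambda$ on them independently; the strict inequality $i < j$ then translates into strict positivity of $\lambda$ on every weight, and the rest is the definition of positivity. The only thing to be slightly careful about is that $\lambda$ is defined on the real span of $\check T$ (not merely on $\check T \otimes \Q$), but this is fine since we only need a real linear functional to separate $0$ from a finite set.
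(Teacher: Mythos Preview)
Your argument is correct. The paper takes a slightly different route: instead of exhibiting a separating functional, it argues directly that no nontrivial nonnegative integer combination $\sum n_{ij}(e_i-e_j)$ can vanish, by observing that the coefficient of $e_1$ in such a sum is $\sum_j n_{1j}\ge 0$, forcing all $n_{1j}=0$, and then inducting on $s$. Your approach is the dual one and arguably more in keeping with the paper's own remark that positivity is equivalent to the existence of a linear functional positive on all weights; it also yields the result in one line without induction. The paper's argument, on the other hand, avoids any appeal to extending the $e_i$ to a basis of $\check T\otimes\R$.
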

 \begin{proof}It is enough to check that 0 is not in the convex hull of $\{e_i-e_j:i<j\}$. Assume that $\sum n_{ij}(e_i-e_j)=0$ for some non-negative integers $n_{ij}$. But $\sum n_{ij}(e_i-e_j)=\sum m_k e_k$ for some integers $m_k$ with $m_1=\sum n_{1j}$. It implies that $n_{1j}=0$ for all $j$. An induction on $s$ finishes the proof.
 \end{proof}

\subsection{Schubert-varieties of flag manifolds}\label{sec:schubert}

Now we show the Incidence Property to the representation discussed in \cite{schur-schubert} and \cite{knutson-miller}.    

Let the group $B^+\times B^-$ act on $\Hom(\C^n,\C^n) \cong \{(m_{ij})_{i,j=1}^{n}\}$, where $B^+$ and $B^-$ are the groups of $n \times n$ upper and lower triangular matrices, respectively, and $(R,L) \cdot M = LM R^{-1}$ for $R \in B^+$, $L \in B^-$ and $M \in \Hom(\C^n,\C^n)$.

Every orbit contains a unique 0-1 matrix $A$, every column and row of which contains at most one 1. (Note that the full rank matrices correspond to the Schubert varieties of the $n$-dimensional flag-manifold.) We can expand such a matrix uniquely to the right and down to a $2n \times 2n$ permutation matrix. This expansion is obtained by adding 1's to rows not containing 1's starting from the top, putting the 1 in the left-most position, where it is possible, outside of the upper-left $n \times n$ matrix. This way we can associate a permutation $\pi \in S_{2n}$ to an orbit, by setting $\pi(i)=j$, if there is a 1 in the $(i,j)$ position of the permutation matrix obtained.

According to \cite[\S4.]{schur-schubert} the maximal torus of the stabilizer group of the matrix $A$ with expanded permutation $\pi$ is
\[ T_{\pi}=\{\left(\diag(\alpha_1,\dotsc,\alpha_n),\diag(\alpha_{\pi(1)},\dotsc,\alpha_{\pi(n)})\right) | \alpha_i \in U(1) \} \]
and an invariant normal space to the orbit of $A$ is
\[ N_A= \{(m_{ij})_{i,j=1}^n | \textrm{ $m_{ij}=0$ if $ \pi(i)\leq j$ or $\pi^{-1}(j)\leq i$}\} \]
of $\Hom(\mathbb{C}^n, \mathbb{C}^{n})$ invariant under $G_{\pi}$ action. 

Using the basis ${e_1,\dotsc,e_n}$ for $\check T_\pi\iso U(1)^n$ the weights of $N_A$ are of the form $e_{\pi(i)}-e_j$ where  $\pi(i)> j\}$, so by Proposition \ref{i<j}. (for the reverse ordering) this orbit has the Incidence Property.

Equivalent statements were proved in \cite{las-sch}, \cite{kumar}, \cite{goldin} and \cite{rim-buch} as a characterization of the {\em Bruhat order}. In fact a formula in \cite{rim-buch} gave us the idea to study the relation of incidence with multidegree.

\subsection{Quiver representations} \label{sec:quiver}

In this section we show that for representations corresponding to quivers of Dynkin type all orbits have the Incidence Property. The equivariant Poincar\'e duals for these cases were first calculated in \cite{quiver}. It also contains a more detailed introduction to the geometry of these representations. Recently another algorithm was given in \cite{knutson-shimozono}.

Consider an oriented graph $Q$ and denote by $Q_0$ the set of its vertices and by $Q_1$ the set of its edges. If $e$ is an edge of $Q$, then let $e'$ and $e''$ denote the  head and the tail of $e$, respectively.

If a function $d:Q_0 \to\mathbb{N}$  (dimension vector) is given we have the group $G=\displaystyle\bigoplus_{i \in Q_0} \gl (d(i))$ acting on $V=\displaystyle\bigoplus_{e \in Q_1} \Hom(\mathbb{C}^{d(e')}, \mathbb{C}^{d(e'')})$, by the formula
\begin{equation*}
\left(\bigoplus_{i \in Q_0} A_i \right) \cdot \left( \bigoplus_{e \in Q_1} \phi_e \right) = \bigoplus_{e \in Q_1}\left( A_{e''} \phi_e A_{e'}^{-1} \right)
\end{equation*}
 The orbits of this representation correspond to the representations (modules) of the path algebra $\mathbb{C}Q$, with dimension vector $d$. 

For graphs of Dynkin type, the set  $R(Q)$ of indecomposable modules of $\mathbb{C}Q$ is finite. Any $\mathbb{C}Q$-module can be decomposed into a  form of $\sum_{r \in R(Q)} \mu_r r$ for some integer numbers $\mu_r$ by the Krull-Schmidt theorem and the numbers $\mu_r$ are well defined. The maximal compact stabilizer subgroup of $M$ is $G_M = \displaystyle\bigoplus_{r \in R(Q)} U( \mu_r)$. A normal space is $N_M = \displaystyle\bigoplus_{r,s \in R(Q)} \Hom(\mathbb{C}^{\mu_r}, \mathbb{C}^{\mu_s})^{m_{rs}}$, where $m_{rs}= \dim \ext_{\mathbb{C}Q}(r, s)$, and $G_{M}$ acts on $N_M$ with the rule 
\begin{equation*}
\left(\bigoplus_{r \in R(Q)} A_r \right) \cdot \left( \bigoplus_{r,s \in R(Q)} \bigoplus_{i=1}^{m_{rs}} \phi_{rs}^i \right) = \left( \bigoplus_{r,s \in R(Q)} \bigoplus_{i=1}^{m_{rs}} A_s\phi_{rs}^i A_r^{-1} \right)
\end{equation*}
where $\phi_{rs}^i\in \Hom(\mathbb{C}^{\mu_r}, \mathbb{C}^{\mu_s})$ for $i=1,\dotsc,m_{rs}$.

The maximal torus $T_M$ of the stabilizer group of $M$ is isomorphic to $U(1)^{\sum\mu_r}$. Let us denote the standard basis of the weight lattice $\check T_M$ by $\{e_{r,j}: r \in R(Q),\ j\leq \mu_r\}$. Then the weights of the representation of $G_M$ on $\Hom(\mathbb{C}^{\mu_r}, \mathbb{C}^{\mu_s})$ are $e_{s,i}-e_{r,j}$ for any $1 \leq i \leq \mu_s$ and $1 \leq j \leq \mu_r$.  To apply Proposition \ref{i<j} it is enough to show that there is an ordering $\succ$ on $R(Q)$, such that if $m_{rs}\neq 0$, then $r \succ s$. 

We recall the notion of the {\em Auslander-Reiten translate} $\tau$ which is partial self mapping map of $ R(Q)$ with the following properties.
\begin{enumerate}
\item \label{preproj} For every $r \in R(Q)$, there is a unique $n_r \in \mathbb{N}$, such that $\tau^{n_r} r$ is projective. (In other words: in our case all indecomposables are pre-projective.)
\item $\ext_{\mathbb{C}Q}(r, s)=\ext_{\mathbb{C}Q}(\tau r, \tau s)$ for every $r,s \in R(Q)$, where both $\tau r$ and $\tau s$ are defined.
\end{enumerate}

(\ref{preproj}) implies that there is an ordering $\succ$ on $R(Q)$, for which if $n_r>n_s$, then $r \succ s$. Suppose that $r \not\succ s$. Then $n_r \leq n_s$, and consequently $m_{rs}=\dim \ext_{\mathbb{C}Q}(l_r, l_s) = \dim \ext_{\mathbb{C}Q}(\tau^{n_r}l_r, \tau^{n_r}l_s) = 0$, since $\tau^{n_r} l_r$ is projective. This shows that indeed if $m_{rs}\neq 0$, then $r \succ s$, and we proved

\begin{theorem}\label{quiver} For a representation corresponding to a Dynkin type quiver all orbits satisfy the Incidence Property. \end{theorem}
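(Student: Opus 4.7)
The plan is to apply Theorem \ref{all}, so the task is to show that for every $\mathbb{C}Q$-module $M$ the normal representation of $G_M$ on $N_M$ is positive. The excerpt already identifies the weights: for each pair $r,s\in R(Q)$ with $m_{rs}\neq 0$ the contribution to the weight set is $\{e_{s,i}-e_{r,j}:1\le i\le\mu_s,\,1\le j\le\mu_r\}$, and the $e_{r,j}$ form a basis of $\check T_M$. Proposition \ref{i<j} thus reduces the theorem to a purely combinatorial assertion: there exists a total order $\succ$ on $R(Q)$ such that $m_{rs}\neq 0\Longrightarrow r\succ s$. Given such a $\succ$, one refines it to a total order on the index set $\{(r,j)\}$ and re-reads each weight $e_{s,i}-e_{r,j}$ in the form $e_a-e_b$ with $a$ preceding $b$, which is exactly the hypothesis of Proposition \ref{i<j}.

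I would then construct $\succ$ via the Auslander--Reiten translate $\tau$, using the two properties recalled just above the statement. Since $Q$ is Dynkin, property (1) tells us every indecomposable is preprojective, so
\[
n_r:=\min\{n\ge 0:\tau^{n}r\text{ is projective}\}
\]
is defined for every $r\in R(Q)$. Take $\succ$ to be any total order refining the partial order $r\succ s\Longleftrightarrow n_r>n_s$.

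The required implication is then handled contrapositively. If $r\not\succ s$, then $n_r\le n_s$, so $\tau^{n_r}s$ is still defined while $\tau^{n_r}r$ is already projective. Property (2) of $\tau$ yields
\[
m_{rs}=\dim\ext_{\mathbb{C}Q}(r,s)=\dim\ext_{\mathbb{C}Q}(\tau^{n_r}r,\tau^{n_r}s)=0,
\]
the last equality because $\mathrm{Ext}^1$ out of a projective vanishes. This gives exactly the implication demanded above, so Proposition \ref{i<j}, Theorem \ref{main}, and Theorem \ref{all} combine to deliver the Incidence Property for the orbit of $M$.

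The only genuinely non-elementary input—and hence the potential obstacle—is the representation-theoretic content packaged into properties (1) and (2) of $\tau$: the Dynkin hypothesis is needed to force every indecomposable into the $\tau$-orbit of a projective, and the $\tau$-invariance of $\mathrm{Ext}^1$ is needed to convert the Ext-vanishing into a statement about the original pair $(r,s)$. Both are standard Auslander--Reiten theory and are quoted as recalled facts; once they are granted, the argument is just the linear-algebra observation behind Proposition \ref{i<j} applied orbit by orbit.
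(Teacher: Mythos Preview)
Your proposal is correct and follows the paper's own argument essentially verbatim: reduce positivity of the normal representation to Proposition \ref{i<j} via an ordering on $R(Q)$, build that ordering from the Auslander--Reiten numbers $n_r$, and verify $m_{rs}\neq 0\Rightarrow r\succ s$ by the contrapositive using $\tau$-invariance of $\ext$ and projectivity of $\tau^{n_r}r$. Your explicit refinement of $\succ$ to a total order on the basis $\{e_{r,j}\}$ is a small clarification the paper leaves implicit, but otherwise there is no difference.
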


%
\section{Singularities} \label{sec:singularities}

In this section we study incidences of singularities of holomorphic map germs. On singularity we mean a contact orbit of map germs. The equivariant Poincar\'e dual in this context is called {\em Thom polynomial}. We use \cite{rrthom} and \cite{tpforgroup} as a general reference.

The space $\mathcal{E}(n,p)$ of holomorphic map germs from $\C^n$ to $\C^p$ and the contact group $\mathcal{K}(n,p)$ acting on it are infinite dimensional. To define $G$-equivariant cohomology for infinite dimensional groups some extra care is needed as it is explained in \cite{rrthom}. Our main interest lies in studying {\em finite codimensional} singularities where a finite dimensional reduction is available.

Only finite codimensional singularities have Thom polynomials so it is a natural choice. It is possible to restrict to infinite codimensional singularities (for example see \cite{integer}), but to simplify the situation we stick with the finite codimensional case.

Every finite codimensional singularity $\eta$ is {\em finitely determined}, i.e. there is a $k\in \N$ such that for any two germs $f\in \eta$ and $g\in \mathcal{E}(n,p)$ we have that $g\in \eta$ if and only if their  $k$\textsuperscript{th} Taylor polynomials (or $k$-jets) $j^k(f)$ and $j^k(g)$ are contact equivalent. So we can reduce to the finite dimensional space of $k$-jets: $J^k(n,p)\iso \bigoplus_{i=1}^k\Hom(\sym^i\C^n,\C^p)$.  We have the truncating map $t_k:\mathcal{E}(n,p)\to J^k(n,p)$. Similarly we can define $\mathcal{K}^k(n,p)$, the group of $k$-jets of the contact group:
\[ \mathcal{K}^k(n,p):=\{(\varphi,\psi)\in J^k(n,n)^\times\times J^k(n+p,n+p)^\times:\psi|_{\C^n\times0}=\Id_{\C^n},\ \pi_{\C^n}\psi=\psi\pi_{\C^n}\}. \]
This group acts on $J^k(n,p)$ and for the truncating homomorphism $\pi_k: \mathcal{K}(n,p)\to \mathcal{K}^k(n,p)$ the map $t_k$ is $\pi_k$-equivariant. The homomorphism $\pi_k$ induces isomorphism on equivariant cohomology and for any two $k$-determined singularities $\eta$ and $\xi$:
\[ [\eta]|_\xi=[t_k\eta]|_{t_k\xi}. \]
We don't prove this statement---as we avoided defining the left hand side---but use as a motivation to study the representation of the algebraic group $\mathcal{K}^k(n,p)$ on the jet space $J^k(n,p)$. 

We cannot expect that all orbits of this representation have the Incidence Property. There are many singularities $\xi$ with small symmetry: the maximal torus of their stabilizer group is trivial. It implies that the restriction map is automatically trivial. To have a chance for the Incidence Property to be satisfied we have to require the existence of at least a $U(1)$ symmetry. Since the diagonal torus $U(1)^n\times U(1)^p$ is maximal in $\mathcal{K}^k(n,p)$ and all maximal tori are conjugate, this requirement implies that a representative $f=(f_1,\dotsc,f_p)$ of the orbit $\xi$ with diagonal symmetry can be chosen. Then $f$ is a {\em weighted homogeneous} polynomial, i.e. there are weights $a_1,\dotsc,a_n,b_1,\dotsc,b_p\in \Z$ such that
\[ f_j(t^{a_1}x_1,\dotsc,t^{a_n}x_n)=t^{b_j}f_j(x_1,\dotsc,x_n) \]
for $j=1,\dotsc,p$. These polynomials are also called quasi-homogeneous.

The Incidence Property can fail even for weighted homogeneous polynomials, which can be detected by restricting them to themselves. According to the definition. Namely if $\xi$ has the Incidence Property then $e(\xi)=[\xi]|_\xi\neq0$. 

\begin{example}\label{exa:e=0} For $\xi=(x^2+3 yz, y^2+3 xz, z^2+3 xy)$ the normal Euler class $e(\xi)=0$. \end{example}
\begin{remark}\label{family}
  This Euler class can be directly calculated using unfolding---see the second part of this section---but there is a deeper reason for the vanishing of $e(\xi)$. This orbit is a member of a one-parameter family of orbits: $f_\lambda=(x^2+\lambda yz, y^2+\lambda xz, z^2+\lambda xy)$. (This is a famous example (\cite{MR0432666}), the smallest codimensional occurence of a family in the equidimensional case.) The tangent direction of the family in the normal space $N_\xi$ has 0 weight, which implies that $e(\xi)=0$. In fact we don't know any example when $e(\xi)=0$ but $\xi$ is {\em not in a family}.
\end{remark}
The following conjecture is a slightly modified version of Rim\'anyi's from \cite{rrthom}:
\begin{conjecture}The singularity $\xi\subset J^k(n,p)$ has the Incidence Property iff $e(\xi)\neq0$.\end{conjecture}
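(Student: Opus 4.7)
The statement is an ``iff,'' with one direction straightforward and the other capturing the essence of Rim\'anyi's original conjecture. I would treat them separately.

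For the direction (Incidence Property $\Rightarrow$ $e(\xi)\neq 0$), apply Definition \ref{def:ip} to the specific $G$-invariant subvariety $X=\overline{\xi}$. Since $\xi\subset \overline{\xi}$, the Incidence Property forces $[\overline{\xi}]|_\xi\neq 0$. By Proposition \ref{inc=tp} this restriction equals $[\overline{\xi}\cap N_x]_{G_x}$. As $N_x$ is transversal to $\xi$ at $x$ and $\xi$ is open in $\overline{\xi}$, the germ of $\overline{\xi}\cap N_x$ at $0$ is the single point $\{0\}$, so the class equals the full normal Euler class $e(\xi)$, which is therefore non-zero.

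For the converse ($e(\xi)\neq 0\Rightarrow$ Incidence Property), which is the substantial direction, my plan is as follows. Given a $G$-invariant subvariety $X$ with $\xi\subset\overline{X}$, Proposition \ref{inc=tp} reduces the task to showing $[X\cap N_x]_{G_x}\neq 0$. By Borel injectivity it suffices to prove $[X\cap N_x]_{T_x}\neq 0$ for the maximal torus $T_x\subset G_x$. By Theorem \ref{tpposinweights} this class is a sum of monomials in the weights of $T_x$ on $N_x$ with non-negative integer coefficients, and the hypothesis $e(\xi)\neq 0$ is precisely the statement that no weight on $N_x$ vanishes, so each individual monomial is itself a non-zero polynomial. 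The task therefore becomes: exhibit one such monomial that is not cancelled by the others. To that end I would degenerate $X\cap N_x$ flatly to a $T_x$-fixed scheme supported on a union of $T_x$-invariant coordinate subspaces of $N_x$ (a Gr\"obner-type limit whose multidegrees sum to $[X\cap N_x]_{T_x}$) and try to extract a distinguished coordinate subspace from the contact stratification of $N_x$ whose contribution carries a unique sign in the total sum.

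The main obstacle is exactly the cancellation step, and it is here that Theorem \ref{convex} offers no help, since positivity typically fails in the singularity setting. Example \ref{exa:e=0} together with Remark \ref{family} identifies the mechanism of cancellation: a zero weight on $N_x$ arises as the tangent direction of a modular family of orbits, and such families produce components of $X\cap N_x$ whose multidegrees cancel in the total sum. The conjectural content is that $e(\xi)\neq 0$ --- which by the contrapositive of Remark \ref{family} implies $\xi$ is not contained in a continuous family of orbits --- excludes this cancellation mechanism. A concrete sub-plan would be to classify $T_x$-invariant coordinate subspaces of $N_x$ according to whether they parametrize degenerations into the family stratification or transverse to it, and argue that only the transverse subspaces appear in the Gr\"obner limit, each contributing with a uniform sign. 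Turning this heuristic into a proof, without a case-by-case analysis over contact types in Section \ref{sec:singularities}, appears to be the central difficulty.
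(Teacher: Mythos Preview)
The statement you are attempting to prove is labelled a \emph{Conjecture} in the paper, and the paper does not prove it. There is therefore no ``paper's own proof'' to compare against; the authors present it as a slightly modified form of Rim\'anyi's open conjecture and then give only partial evidence (positivity in special cases, Theorem~\ref{sigma2} for $\Sigma^2$).

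Your treatment of the direction (Incidence Property $\Rightarrow e(\xi)\neq 0$) is correct and is exactly the observation the paper makes immediately before stating the conjecture: applying Definition~\ref{def:ip} to $X=\overline{\xi}$ yields $e(\xi)=[\xi]|_\xi\neq 0$.

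For the converse, you correctly locate the difficulty but your outlined strategy has a genuine gap that you should be aware of. The condition $e(\xi)\neq 0$ says only that no individual weight of $T_x$ on $N_x$ is zero; it does \emph{not} prevent the weights from having $0$ in their convex hull, and hence does not by itself guarantee that every nonempty $T_x$-invariant subvariety of $N_x$ has nonzero class. A clean counterexample at the level of abstract torus representations: let $\gl(1)$ act on $\C^2$ with weights $+1$ and $-1$. Then $e=\alpha\cdot(-\alpha)=-\alpha^2\neq 0$, yet the invariant hypersurface $\{xy=1\}$ is nonempty, avoids $0$, and has class $0$. So ``no zero weight'' plus Theorem~\ref{tpposinweights} is not enough: the nonzero monomials in the weights can and do cancel. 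Your Gr\"obner-degeneration heuristic does not circumvent this, because the same cancellation occurs among the coordinate subspaces in the limit. Any proof of the hard direction must therefore use something specific to the contact-group action on $J^k(n,p)$ beyond the torus-weight bookkeeping you invoke --- and identifying that extra ingredient is precisely what remains open.
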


\begin{remark} Defining the Incidence Property (Definition \ref{def:ip}) in such a way that we require the condition $e(\xi)=[\xi]|_\xi\neq0$ to be satisfied seems formal, we know that $\xi$ is a subset of $\overline\xi$ anyway. However the vanishing of $e(\xi)$ sometimes can be related to the vanishing of a proper incidence $[\eta]|_\xi$ where $\eta\neq\xi$ but $\xi\subset \overline\eta$. Consider the following abstract example:
\end{remark}

\begin{example} Let $GL(1)$ act on $\C^2$ by $z(x,y)=(x,zy)$. The orbits of this representation are $f_\lambda=\{(\lambda,0)\}$ and $g_\lambda=\{(\lambda,y):\, y\neq0\}$. The fixed point $f_\lambda$ has a $U(1)$ symmetry and $f_\lambda\subset\overline{g_\lambda}$ but the incidence $[g_\lambda]|_{f_\lambda}$ is zero. We have a geometric reason for this: $[g_\mu]|_{f_\lambda}=0$ if $\mu\neq\lambda$ since $f_\lambda\not\subset\overline{g_\mu}$. But $[g_\mu]$ depends continuously on $\mu$ so it is constant. We can also calculate directly: $\overline{g_\lambda}$ is a line so $[g_\lambda]|_{f_\lambda}$ is the Euler class of the complementary invariant line $\{(\lambda,0):\lambda\in \C\}$. But this is exactly the set of fixed points so the Euler class is zero.

We believe that such situation is not uncommon for singularities, i.e. there are contact orbits $f_\lambda$ and $g_\lambda$ depending on the parameter $\lambda$ continuously such that $\codim(f_\lambda)=\codim(g_\lambda)+1$, $f_\lambda\subset\overline{g_\lambda}$ and $f_\lambda$ has a $U(1)$ symmetry. The same reasoning shows that the incidence $[g_\lambda]|_{f_\lambda}$ is zero in this case. Unfortunately we were unable to verify the existence of such an example.
\end{example}

Theorem \ref{main} implies that
\begin{proposition} If a singularity $\xi\subset J^k(n,p)$ is positive i.e the normal action of the stabilizer group is positive, then $\xi$ has the Incidence Property.
\end{proposition}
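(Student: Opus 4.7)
The plan is to reduce the statement to Theorem \ref{all} for the single orbit $\xi$ by descending from the infinite-dimensional contact group action to the finite-dimensional $\mathcal{K}^k(n,p)$-action on $J^k(n,p)$, and then to repeat the two-step argument that combines Proposition \ref{inc=tp} with Theorem \ref{main}.

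First I would dispose of the easy direction of Definition \ref{def:ip}: if $X \subset J^k(n,p)$ is any $G$-invariant subvariety with $\xi \not\subset \overline X$, then, since $\overline X$ is $G$-invariant and $\xi$ is a single orbit, we have $\xi \cap \overline X = \emptyset$, and the class $[X]$ is supported on $\overline X$, forcing $[X]|_\xi = 0$. So the real content is to show that $\xi \subset \overline X$ implies $[X]|_\xi \neq 0$.

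For this direction I would pick a weighted homogeneous representative $f \in \xi$, fix a $G_f$-invariant normal slice $N_f$ to $\xi$ at $f$, and invoke Proposition \ref{inc=tp} to rewrite
\[ [X]|_\xi = [X \cap N_f \subset N_f]_{G_f}. \]
Because $f$ lies in the closure $\overline X$ and the exponential map from $N_f$ is transverse to $\xi$ at $f$, the germ of $X \cap N_f$ at $0 \in N_f$ is a non-empty $G_f$-invariant subvariety of $N_f$. By the positivity hypothesis, the $G_f$-representation on $N_f$ is positive, so Theorem \ref{main} applied to this representation yields $[X \cap N_f]_{G_f} \neq 0$. Combining, $[X]|_\xi \neq 0$.

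The only real subtlety, and the step I would be most careful about, is the passage between the infinite-dimensional contact orbit in $\mathcal{E}(n,p)$ and its finite-dimensional truncation in $J^k(n,p)$: one needs the identity $[\eta]|_\xi = [t_k\eta]|_{t_k\xi}$ (already recorded in the paper via the isomorphism induced by $\pi_k$ on equivariant cohomology) to be compatible with the normal slice computation of Proposition \ref{inc=tp}. Once we agree, as the statement implicitly does, to work entirely inside $J^k(n,p)$ for a $k$ with respect to which every singularity in sight is $k$-determined, the proposition is really a special case of Theorem \ref{all} selected for the single orbit $\xi$, and no new argument is needed beyond assembling the pieces above.
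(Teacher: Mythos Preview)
Your argument is correct and follows exactly the paper's route: the paper simply records ``Theorem \ref{main} implies'' before the proposition, meaning that one combines Proposition \ref{inc=tp} (to rewrite $[X]|_\xi$ as an equivariant Poincar\'e dual in the normal slice) with Theorem \ref{main} (positivity forces that class to be nonzero), exactly as in the derivation of Theorem \ref{all}. You have spelled out the easy direction and the finite-dimensional reduction more explicitly than the paper bothers to, but there is no genuine difference in approach.
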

  
Complicated singularities are usually not positive:
\begin{example} \label{exa:xayb} The singularities $(x^a,y^b)$ for $2\leq a\leq b$ are positive exactly for $(x^2,y^2)$,  $(x^2,y^3)$ and  $(x^2,y^4)$. \end{example}

On the other hand we know many positive cases:
\begin{example}\label{exa:s2} All $\Sigma^1$ and $\Sigma^{2,0}$ singularities (type $A_n,\ I_{a,b},\ III_{a,b},\ IV_a,\ V_a$, for the notation see \cite{mather}) are positive.
\end{example}
\begin{remark} In fact all the incidences of the singularities in Example \ref{exa:s2} can be explicitly calculated since their adjacencies are well understood (see \cite{rrthesis}). For example in the equidimensional case for $2\leq a\leq b$, $(a,b)\neq(2,2)$, $2\leq c\leq d$ and $c+d<a+b$ we have \cite{rrprivate}:
\[  [I_{c,d}]|_{I_{a,b}}=(a-1)!(b-1)!\left(\frac{a^db^c}{(a-c)!(b-d)!}+\frac{a^cb^d}{(a-d)!(b-c)!}\right)\cdot\frac1{\delta_{cd}+1}\cdot g,\]
where $k!=\infty$ for $k$ negative and $\delta_{cd}=1$ if $c=d$ and $0$ otherwise. The maximal torus of the stabilizer group of $I_{a,b}$ for $(a,b)\neq(2,2)$ is $U(1)$ and $g$ denotes the generator of $\Z\iso H^{D}_{U(1)}(pt;\Z)$ where $D$ is the degree of the cohomology class $[I_{c,d}]$.
\end{remark}

We have one result which goes beyond the positivity condition. To state it, we recall the definition of $\Sigma^i$ classes:
\begin{definition}
  \[ \Sigma^i:=\Sigma^i(n,p)=\{f\in J^k(n,p):\dim\ker d_0f=i\}.\]
\end{definition}
We supressed the dependence on $k$ in the notation as the $\Sigma^i$-class of $f$ depends only on the linear term $j^1f$.
\begin{theorem}\label{sigma2} Suppose that the contact orbit $\xi\subset \Sigma^0\cup\Sigma^1\cup\Sigma^2$ has at least a $U(1)$ symmetry (i.e. there is a weighted homogeneous polynomial in the orbit). Then the incidence $[\Sigma^2]|_\xi$ vanishes if and only if $\xi\not\in \Sigma^2$.
\end{theorem}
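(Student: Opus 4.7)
The statement splits into an easy and a hard direction. For the easy direction, suppose $\xi\subset\Sigma^0\cup\Sigma^1$; then $\xi$ is disjoint from the closure $\overline{\Sigma^2}=\{f:\dim\ker d_0 f\ge 2\}$, and the support property of equivariant Poincar\'e duals immediately yields $[\Sigma^2]|_\xi=0$.

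For the hard direction, assume $\xi\subset\Sigma^2$. Choose a weighted homogeneous $f\in\xi$, and let $T=T_f$ denote the maximal torus of the stabilizer $G_f$, with source weights $a_1,\dotsc,a_n\in\check T$ and target weights $b_1,\dotsc,b_p\in\check T$. Pick $T$-invariant decompositions so that $I\subset\{1,\dotsc,n\}$ indexes a weight basis of $K:=\ker d_0 f$ (with $|I|=2$) and $J\subset\{1,\dotsc,p\}$ indexes a $T$-invariant complement $Q$ of $\im d_0 f$ in the target (with $|J|=p-n+2$). By Proposition~\ref{inc=tp}, $[\Sigma^2]|_\xi=[\Sigma^2\cap N_f]_{G_f}$ for a $G_f$-invariant normal slice $N_f$ to $\xi$ at $f$. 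Since $\Sigma^2$ only depends on the $1$-jet, $\overline{\Sigma^2}$ is the preimage of the rank-$\le(n-2)$ locus $\overline{\Sigma^{\ge 2}_{J^1}}\subset J^1$ under the $1$-jet projection $L\colon J^k\to J^1$. Decompose $N_f=N_f^{(1)}\oplus N_f^{(\ge 2)}$ $G_f$-equivariantly, with $N_f^{(1)}\cong L(N_f)$ a normal slice to $\mathcal{K}^1\cdot j^1 f$ in $J^1$ and $N_f^{(\ge 2)}=\ker(L|_{N_f})$; then $\Sigma^2\cap N_f=(\overline{\Sigma^{\ge 2}_{J^1}}\cap N_f^{(1)})\times N_f^{(\ge 2)}$. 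Since $j^1 f$ has rank $n-2$, the slice $N_f^{(1)}$ is $G_f$-equivariantly identified with $\Hom(K,Q)$, and inside it the locus $\overline{\Sigma^{\ge 2}_{J^1}}$ reduces (with multiplicity one, by a codimension count) to the origin, because any non-zero map $K\to Q$ strictly raises the rank of $d_0 f$. Combining these steps:
\[ [\Sigma^2]|_\xi\;=\;e_{G_f}(\Hom(K,Q))\;=\;\prod_{i\in I,\ j\in J}(b_j-a_i)\qquad\text{in } H^*_T(pt). \]

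The technical heart of the proof is then to show that this product is non-zero in $\Q[\check T\otimes\Q]$, equivalently that $b_j\ne a_i$ in $\check T$ for every $(i,j)\in I\times J$. If instead $b_{j_0}=a_{i_0}$ for some $i_0\in I$, $j_0\in J$, then the rank-one linear map $\tilde\Phi\colon\C^n\to\C^p$ sending $x_{i_0}\mapsto y_{j_0}$ and vanishing on $T$-invariant complements is a non-zero $T$-invariant element of $\Hom(K,Q)\subset N_f^{(1)}$, and the unfolding $f_\epsilon:=f+\epsilon\tilde\Phi$ is a $T$-invariant one-parameter family with $f_0\in\Sigma^2$ and $f_\epsilon\in\Sigma^1$ for $\epsilon\ne 0$---exactly the kind of zero-weight family direction analysed in Remark~\ref{family}. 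The main obstacle is to rule out such a family under the hypothesis $\xi\subset\Sigma^0\cup\Sigma^1\cup\Sigma^2$; my natural approach would be to show that its existence produces an additional $\C^*$ in the stabilizer, acting simultaneously on $x_{i_0}$, $y_{j_0}$ and the parameter $\epsilon$, thereby violating the maximality of $T$. This is the corank-$2$ analogue of why the corank-$3$ family phenomenon responsible for $e(\xi)=0$ in Example~\ref{exa:e=0} cannot propagate down to $\Sigma^2$.
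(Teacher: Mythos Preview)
Your setup for the easy direction and your derivation of the product formula $[\Sigma^2]|_\xi=\prod_{i\in I,\,j\in J}(b_j-a_i)$ are correct. The paper obtains the same formula slightly differently: it first reduces to $n=2$ (any $\Sigma^{\le 2}$ germ is contact equivalent to one depending only on two source variables, with $d_0f=0$) and then restricts the Thom--Porteous expression $[\Sigma^2(2,p)]=\prod_j(\beta_j-\alpha_1)(\beta_j-\alpha_2)$ directly, rather than going through a normal-slice Euler-class computation.

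The genuine gap is your final paragraph. You correctly identify that the crux is to rule out $a_{i_0}=b_{j_0}$ (or to show the incidence survives despite such a coincidence), but the mechanism you sketch---that the $T$-invariant family $f_\epsilon=f+\epsilon\tilde\Phi$ forces an additional $\C^*$ in the stabilizer of $f$---is not a proof as it stands. A torus acting on the \emph{family} (i.e.\ moving the parameter $\epsilon$) does not automatically yield a symmetry of $f=f_0$ itself, so no contradiction with the maximality of $T$ follows from what you wrote.

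The paper's actual argument, after the reduction to $n=2$, is a concrete algebraic proposition: if a weighted homogeneous $f\in J^k(2,p)$ has $a_1=b_1$, then $f$ is contact equivalent to a \emph{monomial} germ. The proof is a three-case analysis according to the signs of $a_1,a_2$. If one source weight vanishes, or if they have opposite signs, then each $f_i$ factors as a monomial times a unit and the ideal $(f_1,\dotsc,f_p)$ is monomial regardless of $b_1$; only when $a_1,a_2>0$ does one use $a_1=b_1$ to deduce that $f_1=y^k$, and then reduce the remaining $f_i$ modulo $y^k$ to monomials. Once $f$ is monomial its stabilizer torus has rank at least $2$, and restricting to the $U(1)$ with $a_1=a_2=1$ gives $b_j\ge 2$ for every $j$ (no linear terms, since $f\in\Sigma^2$), so the product is manifestly positive. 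Thus your intuition that a weight coincidence forces extra symmetry is precisely what the paper establishes---but the route is an explicit ideal-theoretic argument in two variables, not the family heuristic.
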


Notice that the classification of $\Sigma^2$ singularities is not known, so case by case checking is not available here. To prepare the proof we make some remarks.
\begin{remarks}

  \begin{enumerate}[(i)]
\item If  the contact orbit $\xi$ is in $ \Sigma^0\cup\Sigma^1\cup\Sigma^2$ then $\xi$ has a representative $f$ such that $f$ depends only on the first two variables so we will assume that $n=2$. If $f\in J^k(2,p)$ then $f\in \Sigma^2$ if and only if $f$ has no linear terms.
  \item $\Sigma^2$ always contains a contact orbit which is open in $\Sigma^2$. If $k=1$ then $\Sigma^2$ is an orbit. If $k\geq 2$ and $p=2$ then $I_{2,2}$---the orbit of $(x^2,y^2)$---is open in $\Sigma^2$. If $p>2$ then $III_{2,2}$---the orbit of $(x^2,xy,y^2,0,\dotsc,0)$---is open in $\Sigma^2$. So for example for the latter case $[III_{2,2}]=[\Sigma^2]$ (see \cite{mather}).
  \item By the Thom-Porteous-Giambelli formula \cite{porteous} we have
\[ [\Sigma^{2}(2,p)]=\prod(\beta_{i}-\alpha_{1})(\beta_{i}-\alpha_{2})\]
 in terms of Chern roots. In other words $\alpha_1,\alpha_2,\beta_1,\dotsc,\beta_p$ denote the generators of $H^2_T(pt)$ where $T=U(1)^2\times U(1)^p$ is the maximal torus acting on $J^1(2,p)$. (It can also be seen directly since $J^1(2,p)\cap \Sigma^{2}(2,p)=0$.)
  \item Consequently for the weighted homogeneous polynomial $f\in J^k(2,p)$ with one $U(1)$ symmetry, i.e. with stabilizer subgroup of rank one, and with weights $ a_1,a_2$ and $b_1,\ldots,b_p$ we have 
\begin{equation}[\Sigma^2]|_f=\prod(b_i-a_1)(b_i-a_2)g^{2p}\label{resu} \end{equation} 
 where $ g $ is the generator of the cohomology ring $ H^2_{U(1)}(pt;\Z)\iso \Z[g] $ of the symmetry group.  In other words the incidence is zero if and only if there is a target weight equal to a source weight. 

If $f$ has symmetry group $U(1) \times U(1)$, then we can still restrict $[\Sigma^2]|_f$ further to the subtorus $U$ of $U(1) \times U(1)$ corresponding to the weighting of $f$. Then we get formula (\ref{resu}) for  $\left( [\Sigma^2] |_f \right) |_U$. Hence if no target weight is equal to a source weight we still get that the incidence is not zero.
  \item Notice that (\ref{resu}) holds only if $f$ has no additional symmetry. For example if $f$ is contact equivalent to a monomial germ then the rank of the symmetry group is at least 2. If $f\in \Sigma^2(2,p)$ is monomial then the incidence $ [\Sigma^2]|_f $ is not zero since we can restrict to the $U(1)$ symmetry corresponding to the source weights $ a_1=a_2=1 $ and see that $b_i\geq 2$ for all $i\leq p$ since $f\in \Sigma^2(2,p)$ implies that there are no linear terms. 
  \end{enumerate}
\end{remarks}

These remarks imply that to prove Theorem \ref{sigma2} it is enough to prove:
\begin{proposition} Let $f\in J^k(2,p)$ be a weighted homogeneous polynomial with weights $ a_1,a_2$ and $b_1,\ldots,b_p$. If $a_1=b_1$ then $f$ is contact equivalent to a monomial germ.
 \end{proposition}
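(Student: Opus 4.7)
The plan is to exploit weighted homogeneity to pin down the shape of $f_1$, and then apply contact equivalence (a source diffeomorphism together with left multiplication by an invertible matrix-valued germ) to reduce every component of $f$ to a single monomial or zero. Since $f_1$ has weight $b_1=a_1$ and $a_1,a_2>0$, the constraint $c_1 a_1+c_2 a_2=a_1$ on the monomials $x_1^{c_1}x_2^{c_2}$ appearing in $f_1$ forces $c_1\in\{0,1\}$, so
\[ f_1=\alpha x_1+\beta x_2^m,\qquad m:=a_1/a_2, \]
with the convention $\beta=0$ unless $a_2\mid a_1$. (If $f_1\equiv 0$ the hypothesis $a_1=b_1$ becomes vacuous, since one is then free to choose $b_1$ at will, so I assume $f_1\neq 0$.)

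\textbf{Case $\alpha\neq 0$.} I first apply the weighted homogeneous source diffeomorphism $\tilde x_1=\alpha x_1+\beta x_2^m$, $\tilde x_2=x_2$, which is invertible and makes $f_1=\tilde x_1$. For $j\ge 2$ I split $f_j(\tilde x_1,x_2)=\tilde x_1\,h_j(\tilde x_1,x_2)+f_j(0,x_2)$ and multiply $f$ on the left by the invertible unipotent matrix $I-\sum_{j\ge 2}h_j\,e_{j1}$ (where $e_{j1}$ denotes the matrix unit with a $1$ in position $(j,1)$); this replaces $f_j$ by $f_j(0,x_2)$, which is a weighted homogeneous polynomial in the single variable $x_2$ of weight $b_j$, and therefore either zero or a single monomial $c_j x_2^{b_j/a_2}$. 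Thus $f$ is contact equivalent to a monomial germ.

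\textbf{Case $\alpha=0$.} Then $a_2\mid a_1$ and, after rescaling $x_2$, $f_1=x_2^m$. The key move is to use target changes $f_j\mapsto f_j-h_j f_1=f_j-h_j x_2^m$ to annihilate every monomial $x_1^{c_1}x_2^{c_2}$ of $f_j$ with $c_2\ge m$, taking $h_j$ to contain the monomial $x_1^{c_1}x_2^{c_2-m}$ with the appropriate coefficient. What survives in $f_j$ is a weighted homogeneous polynomial whose monomials satisfy $c_2\in[0,m)$ and $c_1 a_1+c_2 a_2=b_j$; substituting $a_1=m a_2$ this becomes $(c_1 m+c_2)a_2=b_j$, which by the division algorithm has \emph{at most one} non-negative solution (explicitly $c_1=\lfloor b_j/a_1\rfloor$, $c_2=(b_j/a_2)\bmod m$ when $a_2\mid b_j$). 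Hence each reduced $f_j$ is a single monomial or zero, and again $f$ is contact equivalent to a monomial germ.

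I expect the genuine technical content to lie in the $\alpha=0$ case, specifically in the uniqueness statement: after killing monomials with $c_2\ge m$ via target changes the surviving monomials of each $f_j$ must span an at most one-dimensional space, and this is precisely what the division-algorithm reading of $(c_1 m+c_2)a_2=b_j$ under the restriction $0\le c_2<m$ delivers. The remaining bookkeeping --- checking invertibility of the chosen coordinate and target changes and their compatibility with the weighted grading --- is routine.
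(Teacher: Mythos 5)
Your argument for the case $a_1,a_2>0$ is correct and is essentially the paper's own Case 3: reduce $f_1$ to $y^m$, kill the monomials divisible by $y^m$ in the other components by unipotent target (row) operations, and observe via the division algorithm for $b_j/a_2=c_1m+c_2$ with $0\le c_2<m$ that at most one monomial survives in each $f_j$. On one point you are in fact more careful than the paper: you handle the possibility $f_1=\alpha x_1+\beta x_2^m$ with $\alpha\neq0$ by an explicit coordinate change, whereas the paper simply asserts $f_1=y^k$ (implicitly using that in the intended application $f\in\Sigma^2$ has no linear part). Your exclusion of the degenerate case $f_1\equiv0$ is also reasonable and matches the paper's tacit assumption.

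There is, however, a genuine gap: you assume $a_1,a_2>0$ without justification, while the statement (and the paper's definition of weighted homogeneity) allows arbitrary integer weights, and this cannot be arranged ``without loss of generality'' --- the only global move is $t\mapsto t^{-1}$, which flips all weights simultaneously, so the cases where one of $a_1,a_2$ is zero or where they have opposite signs remain untreated. In those cases your very first step fails: the constraint $c_1a_1+c_2a_2=a_1$ no longer forces $c_1\in\{0,1\}$, so $f_1$ need not have the form $\alpha x_1+\beta x_2^m$ (for instance, with $a_1=1$, $a_2=-1$ take $f_1=x_1+x_1^2x_2$; with $a_2=0$ take $f_1=x_1(1+x_2)$), and the whole case split collapses. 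The paper devotes its Cases 1 and 2 to exactly these possibilities, showing that when some $a_i=0$ or $a_1a_2<0$ every weighted homogeneous component is a monomial times a unit, so the ideal $(f_1,\dotsc,f_p)$ is monomial without any use of the hypothesis $a_1=b_1$; the case of two negative weights then reduces to yours by $t\mapsto t^{-1}$. Adding this easy but necessary case analysis would complete your proof.
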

\begin{proof}
We distinguish three cases and in all cases we will use the fact that if the ideals $ (f_1,\dotsc,f_p) $ and $ (f^{'}_1,\dotsc,f^{'}_p) $ agree then $f$ is contact equivalent to $f^{'}$\negthinspace. In the first two cases we don't need the $a_1=b_1$ assumption.

{\bf Case 1:} If $a_1=0$ or $a_2=0$.  If, say, $a_2=0$ then $f_i=x^{b_i/a_1}g_i(y)$ for some polynomials $g_i(y)$. If the lowest power of $y$ in $g_i(y)$ is $k_i$ then $g_i(y)=y^{k_i}(h_i+y\overline g_i(y)$ for some polynomials $\overline g_i(y)$ and non-zero numbers $h_i$. The functions $h_i+y\overline g_i(y)$ are units in the algebra $\C[y]/(y^{k+1})$ which implies that the ideal $ (f_1,\dotsc,f_p) $ is equal to $ (x^{b_1/a_1}y^{k_1},\dotsc,x^{b_p/a_1}y^{k_p}) $ i.e. $f$ is contact equivalent to a monomial germ.

{\bf Case 2:} If $a_1>0$ and $a_2<0$. Suppose that $\deg x^uy^v=\deg x^{u'}y^{v'}$ and $u\geq u'$. Then $v\geq v'$.It implies that $f_i(x,y)=x^{u_i}y^{v_i}(h_i+g_i(x,y))$ for some $u_i,\ v_i$ non-negative integers and $h_i\neq 0$ constant. Therefore $ (f_1,\dotsc,f_p)=(x^{u_1}y^{v_1},\dotsc,x^{u_p}y^{v_p})$.

{\bf Case 3:} If $a_1>0$ and $a_2>0$. Then $a_1=b_1$ implies that $f_1=y^{k}$ for $a_1=ka_2$. Without loss of generality we can assume that $a_1=k$ and $a_2=1$. From here we don't use the $a_1=b_1$ assumption. There is a unique decomposition $b_i=u_ik+d_i$ for $0 \leq d_i < k$. Then $(f_i,y^k)=(x^{u_i}y^{d_i},y^k)$ for $2\leq i\leq p$, therefore $ (f_1,\dotsc,f_p)=(y^k,x^{u_2}y^{d_2},\dotsc,x^{u_p}y^{d_p})$.
\end{proof}

\subsection{Calculations of the Examples \ref{exa:xayb} and \ref{exa:s2}} \label{sec:calc}

Let us recall now that the normal space of the contact orbit of the singularity $\xi=(f_1,\dotsc,f_p)$ is isomorphic
to $\C^n\oplus U_\xi$ where $U_\xi=J^k(n,p)/(f_ie_j,\partial f/x_l:i,j\leq p,l\leq n)$ is the {\em unfolding space} of $\xi$. Here $e_j$ denotes the constant map $(0,\dotsc,0,1,0,\dotsc,0)$ with 1 in the $j$\textsuperscript{th} coordinate. (see e.g. in \cite{agv} or \cite{tpforgroup}).

{\bf \large $(x^a,y^b)$:} The unfolding space is spanned by the monomials $\{(x^iy^j,0): i<a-1, j<b\}$ and $\{(0,x^iy^j): i<a,j<b-1\}$ (except for $i=0,\ j=0$---constants are not in $J^k(n,p)$). The maximal torus of the symmetry is a $U(1)^2=\{(\alpha,\beta):\alpha,\beta\in U(1)\}$ acting via $\bigl(\smx\alpha00\beta,\smx{\alpha^a}00{\beta^b}\bigr)$ on $\Hom(\C^2,\C^2)$ therefore the weights of the normal space are 
$$\{(i,j):i=2,\dotsc,a;\, j=-b+1,\dotsc,0\}\setminus \{(a,0)\}$$ 
and 
$$\{(i,j):i=-a+1,\dotsc,0;\, j=2,\dotsc,b\}\setminus \{(0,b)\}.$$ By sketching the distribution of the weights on the coordinate plane we can see that $(0,0)$ is in the convex hull unless $a=2$ and $b=2,3$ or $4$.

To demonstrate that $\Sigma^1$ and $\Sigma^{2,0}$ singularities are positive we pick the series $III_{a,b}$. the other calculations are similar but simpler since the symmetry is $U(1)$.

{\bf \large $III_{a,b}=(x^a,xy,y^b)$:} The unfolding space is spanned by 
$$\{(x^j,0,0):j=1,\dotsc,a-2\}\cup \{(y^j,0,0):j=1,\dotsc,b-1\}\cup $$
$$\{(0,0,x^j):j=1,\dotsc,a-1\}\cup\{(0,0,y^j):j=1,\dotsc,b-2\}\cup \{(ax^{a-1},y,0),(0,x,by^{b-1})\}.$$
The symmetry is $U(1)^2$ and the weights are
\[ \{(1,0),(2,0),\dotsc,(a-1,0);(a,-1),(a,-2),\dotsc,(a,1-b);(0,1),\dotsc,(0,b-1);(-1,b),\dotsc,(1-a,b)\}.\]
The linear functional $bx+ay$ is positive on all these weights  proving the positivity of $III_{a,b}$.

\bibliography{inci}

\def\cprime{$'$} \def\cprime{$'$}
\begin{thebibliography}{AGZV88}

\bibitem[AGZV88]{agv}
V.~I. Arnol{\cprime}d, S.~M. Guse{\u\i}n-Zade, and A.~N. Varchenko.
\newblock {\em Singularities of differentiable maps. {V}ol. {II}}, volume~83 of
  {\em Monographs in Mathematics}.
\newblock Birkh\"auser Boston Inc., Boston, MA, 1988.

\bibitem[BR04]{rim-buch}
Anders~S. Buch and Rich{\'a}rd Rim{\'a}nyi.
\newblock Specializations of {G}rothendieck polynomials.
\newblock {\em C. R. Math. Acad. Sci. Paris}, 339(1):1--4, 2004.

\bibitem[EG98]{MR1614555}
Dan Edidin and William Graham.
\newblock Equivariant intersection theory.
\newblock {\em Invent. Math.}, 131(3):595--634, 1998.

\bibitem[FNR07]{dual}
L\'aszl\'o~M. Feh\'er, Andr\'as N\'emethi, and Rich\'ard Rim\'anyi.
\newblock The degree of the discriminant of irreducible representations.
\newblock to appear in J. Algebraic Geometry, 2007.

\bibitem[FR02a]{quiver}
L{\'a}szl{\'o} Feh{\'e}r and Rich{\'a}rd Rim{\'a}nyi.
\newblock Classes of degeneracy loci for quivers: the {T}hom polynomial point
  of view.
\newblock {\em Duke Math. J.}, 114(2):193--213, 2002.

\bibitem[FR02b]{integer}
L{\'a}szl{\'o}~M. Feh{\'e}r and Rich{\'a}rd Rim{\'a}nyi.
\newblock Thom polynomials with integer coefficients.
\newblock {\em Illinois J. Math.}, 46(4):1145--1158, 2002.

\bibitem[FR03]{schur-schubert}
L{\'a}szl{\'o}~M. Feh{\'e}r and Rich{\'a}rd Rim{\'a}nyi.
\newblock Schur and {S}chubert polynomials as {T}hom polynomials---cohomology
  of moduli spaces.
\newblock {\em Cent. Eur. J. Math.}, 1(4):418--434 (electronic), 2003.

\bibitem[FR04]{tpforgroup}
L{\'a}szl{\'o}~M. Feh{\'e}r and Rich{\'a}rd Rim{\'a}nyi.
\newblock Calculation of {T}hom polynomials and other cohomological
  obstructions for group actions.
\newblock In {\em Real and complex singularities}, volume 354 of {\em Contemp.
  Math.}, pages 69--93. Amer. Math. Soc., Providence, RI, 2004.

\bibitem[Ful97]{Fulton-Young-tableaux}
William Fulton.
\newblock {\em Young tableaux}, volume~35 of {\em London Mathematical Society
  Student Texts}.
\newblock Cambridge University Press, Cambridge, 1997.
\newblock With applications to representation theory and geometry.

\bibitem[Gol01]{goldin}
R.~F. Goldin.
\newblock The cohomology ring of weight varieties and polygon spaces.
\newblock {\em Adv. in Math.}, 160:175--204, 2001.

\bibitem[Kaz97]{kaza-char}
M.~{\'E}. Kazarian.
\newblock Characteristic classes of singularity theory.
\newblock In {\em The Arnold-Gelfand mathematical seminars}, pages 325--340.
  Birkh\"auser Boston, Boston, MA, 1997.

\bibitem[KM05]{knutson-miller}
Allen Knutson and Ezra Miller.
\newblock Gr\"obner geometry of {S}chubert polynomials.
\newblock {\em Annals of Math.}, 2(3):1245--1318, 2005.
\newblock math.AG/0110058.

\bibitem[KMS06]{four}
Allen Knutson, Ezra Miller, and Mark Shimozono.
\newblock Four positive formulae for type {A} quiver polynomials.
\newblock {\em Invent. math.}, 166:229--325, 2006.

\bibitem[KS06]{knutson-shimozono}
Allen Knutson and Mark Shimozono.
\newblock Kempf collapsing and quiver loci.
\newblock math.AG/0608327, 2006.

\bibitem[Kum96]{kumar}
S.~Kumar.
\newblock The nil hecke ring and singularity of {S}chubert varieties.
\newblock {\em Invent. Math.}, 123(3):471--506, 1996.

\bibitem[LS92]{las-sch}
A.~Lascoux and M.-P. Sch\"utzenberger.
\newblock D\'ecompositions dans l'alg\'ebre des differences divis\'ees.
\newblock {\em Discrete Math.}, 99(1--3):165--179, 1992.

\bibitem[Mat71]{mather}
J.~Mather.
\newblock Stability of {$C^{\infty}$} mappings. {VI}. the nice dimensions.
\newblock In {\em Liverpool Singularities --- Symposium I}, number 192 in SLNM,
  pages 207--253, 1971.

\bibitem[Pat06]{patak}
Zsolt Patakfalvi.
\newblock Orbit structures and incidence.
\newblock Master's thesis, Eotvos University, Budapest, 2006.
\newblock http://www.cs.elte.hu/math/diploma/math/kopasz.pdf.

\bibitem[Por71]{porteous}
I.~Porteous.
\newblock Simple singularities of maps.
\newblock In {\em Liverpool Singularities---Symposium I}, number 192 in SLNM,
  pages 286--307, 1971.

\bibitem[Rim]{rrprivate}
Rich{\'a}rd Rim{\'a}nyi.
\newblock private communication.

\bibitem[Rim99]{rrthesis}
Rich{\'a}rd Rim{\'a}nyi.
\newblock {\em Generalized {P}ontrjagin-{T}hom construction for singular maps}.
\newblock PhD thesis, Eotvos University, Budapest, 1999.
\newblock http://www.cs.elte.hu/phd\_th/rimanyi.ps.gz.

\bibitem[Rim01]{rrthom}
Rich{\'a}rd Rim{\'a}nyi.
\newblock Thom polynomials, symmetries and incidences of singularities.
\newblock {\em Invent. Math.}, 143(3):499--521, 2001.

\bibitem[Wal77]{MR0432666}
C.~T.~C. Wall.
\newblock Nets of conics.
\newblock {\em Math. Proc. Cambridge Philos. Soc.}, 81(3):351--364, 1977.

\end{thebibliography}

\bibliographystyle{alpha}

\end{document}